\documentclass[11pt]{amsart}
\usepackage[margin=1.05in]{geometry}
\usepackage{amsmath,amssymb,amsthm}
\usepackage{enumerate}
\usepackage[colorlinks=true,linkcolor=blue,citecolor=blue,urlcolor=blue]{hyperref}
\usepackage{xcolor}

\newtheorem{theorem}{Theorem}[section]
\newtheorem{proposition}[theorem]{Proposition}
\newtheorem{lemma}[theorem]{Lemma}
\newtheorem{corollary}[theorem]{Corollary}
\newtheorem*{maintheorem}{Main Theorem}
\theoremstyle{definition}
\newtheorem{definition}[theorem]{Definition}

\newtheorem{remark}[theorem]{Remark}
\numberwithin{equation}{section}

\newcommand{\Fp}{\mathbb{F}_p}
\newcommand{\CC}{\mathbb{C}}
\newcommand{\Vect}{\mathrm{Vec}}
\newcommand{\cZ}{\mathcal{Z}}
\newcommand{\cC}{\mathcal{C}}
\newcommand{\cD}{\mathcal{D}}
\newcommand{\cE}{\mathcal{E}}
\newcommand{\cB}{\mathcal{B}}
\newcommand{\one}{\mathbf{1}}
\newcommand{\Hom}{\operatorname{Hom}}
\newcommand{\End}{\operatorname{End}}
\newcommand{\id}{\mathrm{id}}
\newcommand{\Res}{\operatorname{Res}}
\newcommand{\Forg}{\operatorname{Forg}}
\newcommand{\Alt}{\operatorname{Alt}}
\newcommand{\RT}{\operatorname{RT}}
\newcommand{\Rep}{\operatorname{Rep}}
\newcommand{\Fun}{\operatorname{Fun}}
\newcommand{\Irr}{\operatorname{Irr}}
\newcommand{\Tr}{\operatorname{Tr}}

\newcommand{\sgn}{\operatorname{sgn}}
\newcommand{\Aut}{\operatorname{Aut}}
\newcommand{\spanop}{\operatorname{span}}

\begin{document}

\title[Inequivalent modular tensor categories with identical $S$, $T$ and $W$]{Inequivalent modular tensor categories\\ with identical $S$, $T$ and $W$}

\begin{abstract}
It remains an open question whether the modular data $(S,T)$ together with the Whitehead-link matrix $W$ determine a modular tensor category up to ribbon equivalence. We answer negatively by constructing two braided-inequivalent Dijkgraaf--Witten modular categories for $(\mathbb Z/p\mathbb Z)^3$ for prime $p\ge5$ with identical $(S,T,W)$ and, more generally, identical Reshetikhin--Turaev invariants of all framed oriented links with at most two components under a single bijection of simple objects. The construction varies an alternating $3$-cocycle while fixing a quadratic cohomology class, using restriction to subgroups of rank at most two to match link invariants and the quadratic and alternating classes together to obstruct braided equivalence. A complete classification of modular tensor categories up to ribbon equivalence therefore requires invariants beyond those of colored framed links with at most two components. We also analytically evaluate an uncolored partition function on a closed oriented three-manifold that separates all $(p-1)/2$ equivalence classes in the family.

\end{abstract}

\author{Ran Luo}
\address{School of Physics, Peking University, Beijing, China}
\email{ranluo@pku.edu.cn}

\author{Jiahua Tian}
\address{School of Physics, East China Normal University, Shanghai, China}
\email{jtian1905@gmail.com}

\maketitle

\tableofcontents

%======================================================================
\section{Introduction and the result}
%======================================================================

It was proved in~\cite{MS} that the modular data $(S,T)$ do not determine a modular tensor category (MTC) up to ribbon equivalence. The counterexamples arise from groups $G=\mathbb Z_q\rtimes\mathbb Z_p$ for odd primes $p,q$ with $p\mid q-1$. The $p$ Drinfeld centers $\cZ(\Vect_G^{\omega^u})$ are pairwise inequivalent as braided categories~\cite[Lemma~3.1]{MS}, yet realize at most three sets of modular data~\cite[Corollary~4.2]{MS}. To obtain information beyond $(S,T)$, the $W$-matrix was introduced in~\cite[\S2.1.2 and Definition~2.1]{BDGRTW} as a normalized invariant of a colored framed oriented Whitehead link. It distinguishes these examples for $|G|=55$ and hence is not determined by $(S,T)$~\cite[Theorem~4.1 and \S4]{BDGRTW}. This work left open the question of whether the combined data $(S,T,W)$ determine an MTC up to ribbon equivalence~\cite[Abstract and \S1]{BDGRTW}.

In this work, we prove that $(S,T,W)$ does \textbf{not} determine an MTC up to ribbon equivalence by establishing the following.

\begin{maintheorem}
For every prime $p\ge5$, put $\Fp=\mathbb Z/p\mathbb Z$. There exists a family of Dijkgraaf--Witten modular categories $\{\cC_u\}_{u\in\Fp^\times}$ associated with the group $(\mathbb Z/p\mathbb Z)^3$ such that the following statements hold for all $u,v\in\Fp^\times$.
\begin{enumerate}[(A)]
\item There is a bijection $\Lambda_{uv}:\Irr(\cC_u)\to\Irr(\cC_v)$ with $\Lambda_{uv}(\one)=\one$ such that for every framed oriented link $L\subset S^3$ with $n\in\{1,2\}$ components and all simple objects $X_1,\dots,X_n$ of $\cC_u$,
\begin{equation*}
\RT_{\cC_u}(L;X_1,\dots,X_n)=\RT_{\cC_v}(L;\Lambda_{uv}X_1,\dots,\Lambda_{uv}X_n).
\end{equation*}
Consequently the matrices $\widetilde S,S,T$ and the matrices $\widetilde W,W$ of \cite[\S2.1.2, Definition~2.1]{BDGRTW} of $\cC_u$ and $\cC_v$ coincide under $\Lambda_{uv}$.
\item If $\cC_u$ and $\cC_v$ are braided equivalent, then $u=\pm v$. In particular, if $u\neq\pm v$ then $\cC_u$ and $\cC_v$ are not ribbon equivalent.
\end{enumerate}
\end{maintheorem}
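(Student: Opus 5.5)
We take $G=\Fp^3$ and use the standard description of $H^3(G,\CC^\times)$ for elementary abelian $p$-groups, $p$ odd: a $3$-cocycle class is determined by a "quadratic part" (the classes $a_i\,\beta(a_j)$ built from the Bockstein) together with an "alternating part" $a_1\cup a_2\cup a_3$, whose class is a top alternating form $\Alt^3(G^\vee)\cong\Fp$. We fix one quadratic class $\omega_0$ that is nondegenerate in a suitable sense, and put $\omega_u=\omega_0\cdot\alpha^u$, where $\alpha$ is the alternating cocycle $\alpha(x,y,z)=\zeta_p^{x_1y_2z_3}$. Then $\cC_u=\cZ(\Vect_G^{\omega_u})$. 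Simple objects are pairs $(g,\chi)$ with $\chi$ an irreducible $\theta_g$-projective representation of $G$. Here $\theta_g$ is the transgression of $\omega_u$ at $g$; its class is the commutator form $(h,k)\mapsto\zeta_p^{u\det(g,h,k)}$, restricted from $\alpha^u$. For $g\neq0$ this has kernel $\langle g\rangle$, so there are $p$ projective irreducibles of dimension $p$. For $g=0$ there are $p^3$ characters.

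\textbf{Step (A).} The plan is to show that every RT invariant of a link with at most two components depends only on the restriction of $\omega_u$ to the subgroup generated by the two holonomies. Colorings of the complement by simples of $\cZ(\Vect_G^\omega)$ are computed by the Dijkgraaf--Witten state sum over homomorphisms $\pi_1(S^3\setminus L)\to G$, weighted by characters. The image of such a homomorphism for a two-component link need not have rank at most $2$ in general. However, after a character expansion, the invariant of a colored $n$-component link is a sum over $n$-tuples of meridian holonomies $g_1,g_2$ with commuting-longitude constraints, together with a cocycle factor. Since $G$ is abelian, the meridians generate $\pi_1$'s image, so the image is $\langle g_1,g_2\rangle$, of rank at most $2$. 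On any rank $\le2$ subgroup $H$, the alternating class $\alpha^u|_H$ is cohomologically trivial, but it is not trivial as a cochain, and its trivialization $\alpha^u|_H=d\beta_{H,u}$ enters the identification. The key step is to build $\Lambda_{uv}$ explicitly as $(g,\chi)\mapsto(g,\chi')$, where the $p$-dimensional $\theta_g$-projective irreducibles for $u$ and for $v$ are matched via the $\Fp^\times$ scaling. Concretely, we use the automorphism $\phi$ of $G$ that scales one coordinate transverse to $g$ by $v/u$ and fixes the quadratic form pieces on relevant subgroups; that $\omega_0$ survives this is where $p\ge5$ and the choice of $\omega_0$ enter. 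We must check that this matching is compatible simultaneously for all pairs $(g_1,g_2)$. For that we would write $\RT(L;X_1,X_2)$ via the formula for links in DW theory (sum over $h_i$ in centralizers, weighted by the linking number pairing $\omega$-transgressions) and verify that the integrand depends only on $\omega|_{\langle g_1,g_2\rangle}$ up to a coboundary. We then verify that the coboundary's contribution cancels between the characters $\chi$ and $\chi'$. This is the main obstacle: one needs a single bijection working for all pairs, so the choice of $\omega_0$ must make the rank-$2$ restrictions of $\omega_u$ and $\omega_v$ related by coboundaries compatible with a global relabeling.

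\textbf{Step (B).} A braided equivalence $\cZ(\Vect_G^{\omega_u})\simeq\cZ(\Vect_G^{\omega_v})$ for this abelian $G$ with both sides pointed-free should preserve the canonical Lagrangian-type data. We would use the invariant given by the subcategory of simples of dimension $1$, namely $\Rep(G)$ (the $g=0$ part), which is intrinsic since all other simples have dimension $p$. Any braided equivalence therefore preserves $\Rep(G)$ as a Tannakian subcategory, so it comes from an isomorphism $\phi\in GL_3(\Fp)$ with $\phi^*\omega_v=\omega_u$ in $H^3$; this follows from Naidu--Nikshych-type results on Lagrangian subcategories. Comparing alternating parts gives $\det(\phi)\,v=u$. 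Comparing quadratic parts gives $\phi^*\omega_0=\omega_0$, and we choose $\omega_0$ so that this forces $\det\phi=\pm1$, for instance by letting the associated quadratic form be scaled by $\det\phi^2$ or be a nondegenerate form. Together these yield $u=\pm v$.
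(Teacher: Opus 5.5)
Your part (B) is essentially the paper's argument. The dimension-one simples form the intrinsic Tannakian subcategory $\Rep(V)$, and a braided equivalence preserving it yields $\Vect_V^{\omega_u}\simeq\Vect_V^{\omega_v}$. The paper gets this by transporting the regular algebra $\Fun(V)$ and taking modules, and mentions your Lagrangian-subcategory route as an alternative. The quadratic invariant then forces $\det M=\pm1$, and the alternation gives $v\det M=u$. This works once $\omega_0$ has a nondegenerate quadratic form, such as the paper's $\kappa$ with form $a_1^2+a_2^2+a_3^2$.

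\textbf{The gap is in (A)}, at exactly the step you call the main obstacle. You never produce one bijection $\Lambda_{uv}$ together with a proof that it is compatible with every pair of fluxes, and the device you propose cannot supply it. For $v\neq\pm u$, an automorphism fixing $g$ and rescaling a transverse coordinate by $v/u$ has determinant $v/u\neq\pm1$. So it cannot preserve a nondegenerate quadratic part of $\omega_0$ globally; indeed, by (B) itself, no global automorphism carries $\omega_u$ to $\omega_v$. Used locally, it depends on $g$ and on the chosen transverse direction, and it moves the partner flux $g_2$. It gives no reason why the matching at flux $g_1$ should be independent of the partner. Your other two steps are stated as things to be checked rather than checked: that the integrand depends on $\omega|_{\langle g_1,g_2\rangle}$ only up to a coboundary, and that this coboundary cancels between $\chi$ and $\chi'$. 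Your reduction to the rank-$\le2$ subgroup $\langle g_1,g_2\rangle$ is the right idea; the paper implements it with a ribbon restriction functor $\Res_P$ instead of a state sum.

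Two ideas close the gap, and neither involves $\omega_0$, whose choice matters only for (B).

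First, replace $\alpha^u=\zeta^{u\,x_1y_2z_3}$ by the cohomologous antisymmetrized cochain $\tau_u=\zeta^{\frac u6\det(x,y,z)}$. It has the same alternation $\zeta^{u\det}$ and trivial quadratic invariant, and this is where $6\in\Fp^\times$, i.e.\ $p\ge5$, is used. It is identically $1$ on $P^3$ whenever $\dim P\le2$, so $\omega_u|_{P^3}=\omega_v|_{P^3}$ on the nose. Restricting half-braidings then gives ribbon functors from $(\cC_u)_P$ and from $(\cC_v)_P$ into literally the same category $\cZ(\Vect_P^{\omega_0|_{P^3}})$. No trivializing cochain $\beta_{H,u}$ ever appears.

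Second, label the simples of flux $a\neq0$ by the scalar by which $\rho(a)$ acts, and let $\Lambda_{uv}$ preserve the flux and this scalar. It is a scalar because $a$ lies in the radical $\Fp a$ of the commutator form $(x,y)\mapsto\zeta^{-u\det(a,x,y)}$. The character of a flux-$a$ simple vanishes off $\Fp a$, and on $\Fp a$ it is determined by that scalar and the $u$-independent multiplier. So matched simples have equal characters and the same multiplier on every $P\ni a$ with $\dim P\le2$. Hence they have isomorphic restrictions to \emph{all} such $P$ simultaneously. That is the coherence you were missing. Both colors of a two-component link are then compared inside the single category $\cZ(\Vect_P^{\omega_0|_{P^3}})$ with $P=\langle g_1,g_2\rangle$, and functoriality of RT evaluations gives (A).
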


\begin{corollary}\label{cor:p5}
For $p=5$, the modular categories $\cC_1$ and $\cC_2$ \textup{(}twisted Drinfeld doubles of the abelian group $(\mathbb Z/5\mathbb Z)^3$ of order $125$\textup{)} have the same $S$, $T$ and $W$ under one $\Lambda_{12}$, but they are not ribbon equivalent.
\end{corollary}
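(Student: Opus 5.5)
The corollary is the case $p=5$, $u=1$, $v=2$ of the Main Theorem, so I would deduce it directly from parts (A) and (B). First, $p=5$ is a prime with $p\ge5$, so the family $\{\cC_u\}_{u\in\mathbb F_5^\times}$ exists. Each $\cC_u$ is a Dijkgraaf--Witten modular category for $(\mathbb Z/5\mathbb Z)^3$, that is, a twisted Drinfeld double $\cZ(\Vect_G^{\omega_u})$ with $|G|=125$. This settles the parenthetical description of $\cC_1$ and $\cC_2$.

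For the equality of invariants, I would apply part (A) with $u=1$, $v=2$. This gives a single bijection $\Lambda_{12}:\Irr(\cC_1)\to\Irr(\cC_2)$ with $\Lambda_{12}(\one)=\one$. Under $\Lambda_{12}$, the Reshetikhin--Turaev invariants of every colored framed oriented link with one or two components agree. The unnormalized $\widetilde S$ is the colored Hopf link invariant, and $T$ is read off from the framing-$1$ unknot, normalized by the unknot. The matrices $\widetilde W$ and $W$ are the colored Whitehead link invariant and its normalization by quantum dimensions, which are themselves one-component unknot invariants. Since all of these are two-component link data, the ``consequently'' clause of (A) gives $S$, $T$ and $W$ equal under $\Lambda_{12}$. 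The normalization by the global dimension also matches, because $\sum_X d_X^2$ is preserved by a bijection that preserves each $d_X$.

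For inequivalence, I would check that $1\neq\pm2$ in $\mathbb F_5$. This holds because $-2\equiv3\pmod 5$, so $\pm2\in\{2,3\}$, which does not contain $1$. By part (B), $\cC_1$ and $\cC_2$ are therefore not braided equivalent. A ribbon equivalence is in particular a braided equivalence, so they are not ribbon equivalent either.

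No step is a genuine obstacle here. The only point needing care is that (A) supplies one bijection that works simultaneously for $S$, $T$ and $W$, which is exactly how it is stated.
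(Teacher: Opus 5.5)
Your proposal is correct and matches the paper's proof. You specialize the Main Theorem to $p=5$, $(u,v)=(1,2)$, use part~(A) for the common $\widetilde S,S,T,\widetilde W,W$ under $\Lambda_{12}$, and use part~(B) with $1\neq\pm2$ in $\mathbb F_5$ (the paper checks the equivalent $2\notin\{1,4\}$), together with the fact that every ribbon equivalence is braided. One small slip: the paper normalizes $W_{XY}=\frac{\theta_X}{\theta_Y}\widetilde W_{XY}$ by twists rather than by quantum dimensions, but twists are also one-component unknot data preserved by $\Lambda_{12}$, so the argument is unaffected.
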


\subsection{The family of categories}\label{subsec:notation}
We now construct the family in the Main Theorem. Throughout, $p\ge5$ is a prime, $\Fp=\mathbb Z/p\mathbb Z$, and
\begin{equation*}
V:=\Fp^{3},
\end{equation*}
regarded as the finite abelian group $(\mathbb Z/p\mathbb Z)^3$ whose subgroups are precisely its $\Fp$-linear subspaces with $\Fp$-linear group automorphisms. We write $\zeta:=e^{2\pi i/p}$ and $\eta:=e^{2\pi i/p^2}$, so that $\eta^p=\zeta$. The expression $\zeta^t$ is well defined for $t\in\Fp$. For $r\in\Fp$, let $[r]\in\{0,1,\dots,p-1\}\subset\mathbb Z$ be its integer representative. The failure of $[\,\cdot\,]:\Fp\to\mathbb Z$ to be additive is measured by the carry

\begin{equation*}
C_p(r,s):=\frac{[r]+[s]-[r+s]}{p}\in\{0,1\}\qquad(r,s\in\Fp),
\end{equation*}
so that $[r+s]=[r]+[s]-p\,C_p(r,s)\in\mathbb Z$. We retain the distinction between field elements and their integer representatives throughout. In particular, the numerator defining $C_p$ is computed in $\mathbb Z$, since it would vanish in $\Fp$. The exponent of the $p^2$-th root $\eta$ in the functions $g_a$ of Section~\ref{sec:cocycles} must likewise be computed as an integer without reduction modulo $p$.

For $a,b,c\in V$, let $\det(a,b,c)\in\Fp$ denote the determinant with columns $a,b,c$. We use the standard basis $\mathbf{e}_1,\mathbf{e}_2,\mathbf{e}_3$ and write

\begin{equation*}
Q(a):=a_1^2+a_2^2+a_3^2\in\Fp,\qquad V^\vee:=\Hom_{\Fp}(V,\Fp)
\end{equation*}
for $a=a_1\mathbf e_1+a_2\mathbf e_2+a_3\mathbf e_3, a_1,a_2,a_3\in\mathbb F_p$. Since $p\ge5$, both $2$ and $6$ are invertible in $\Fp$. Accordingly, $u/2$ and $u/6$ denote $u\cdot2^{-1}$ and $u\cdot6^{-1}$ in $\Fp$.

\begin{definition}\label{def:cocycles}
For $a,b,c\in V$ and $u\in\Fp$ put
\begin{equation*}
\kappa(a,b,c):=\zeta^{\sum_{i=1}^{3}[a_i]\,C_p(b_i,c_i)},\qquad
\tau_u(a,b,c):=\zeta^{\frac u6\det(a,b,c)},\qquad
\omega_u:=\kappa\,\tau_u .
\end{equation*}
\end{definition}

In the exponent of $\kappa$ the integer $\sum_i[a_i]C_p(b_i,c_i)$ may be reduced modulo $p$ which however does not affect the result since $\zeta^N=\zeta^{N\bmod p}$. By Lemma~\ref{lem:cocycle} these are normalized $3$-cocycles. Since $H^3(V,U(1))=H^4(V,\mathbb Z)=\operatorname{Sym}^2(V^\vee)\oplus\Lambda^3(V^\vee)$~\cite[Lemma~3.1]{JFT}, writing $\omega_u=\kappa\tau_u$ matches exactly this decomposition. Indeed, in Lemma~\ref{lem:invariants}, $\kappa$ and $\omega_u$ are shown to be related to quadratic cohomological invariant $q$ and alternating cohomological invariant $\Alt$, respectively.

We write
\begin{equation*}
\cC_u:=\cZ\big(\Vect_V^{\omega_u}\big),
\end{equation*}
equipped with the canonical ribbon structure of Section~\ref{subsec:center-conventions}.

%======================================================================
\subsection{Outline}\label{sec:outline}

\paragraph{\textbf{Equality of link invariants.}}
For $u\in\Fp^\times$, the simple objects of $\cC_u$ admit a common parametrization whose fluxes and projective characters are independent of $u$. Matching the same parameters defines the bijection $\Lambda_{uv}:{\rm Irr}(\cC_u)\to{\rm Irr}(\cC_v)$ of Definition~\ref{def:Lambda}. For a link with at most two components, the fluxes of its simple colors span a subspace $P\subseteq V$ of dimension at most two. Since the alternating factor $\tau_u$ is trivial on $P$, $\omega_u|_{P^3}=\omega_v|_{P^3}=\kappa|_{P^3}$. Restricting half-braidings for objects supported on $P$ therefore gives ribbon functors from either $(\cC_u)_P$ or $(\cC_v)_P$ to the common category $\cB_P=\cZ(\Vect_P^{\kappa|_{P^3}})$. Matched colors have the same projective multiplier and character after restriction, hence isomorphic images by Lemma~\ref{lem:match}. These ribbon functors preserve link evaluations, so all framed oriented link invariants with at most two components agree under $\Lambda_{uv}$ by Theorem~\ref{thm:step1}. This proves part~(A), including equality of $S,T,W$.

\smallskip
\paragraph{\textbf{Obstruction to braided equivalence.}}
Let $u,v\in\Fp^\times$. The electric simples are precisely the invertible objects of $\cC_u$. Every braided equivalence $\Phi:\cC_u\to\cC_v$ must therefore preserve the electric subcategories $\cE_u\simeq\Rep(V)$. Within $\cE_u$, the regular algebra $A_u=\Fun(V)$ is characterized up to algebra isomorphism as the unique connected \'etale algebra of dimension $|V|$. Hence $\Phi(A_u)\cong A_v$ as algebras by Lemma~\ref{lem:regular}. Taking right modules over $A_u$ recovers $\Vect_V^{\omega_u}$, so $\Phi$ induces a monoidal equivalence between $\Vect_V^{\omega_u}$ and $\Vect_V^{\omega_v}$ by Corollary~\ref{cor:pointedequiv}. By Lemma~\ref{lem:pointed}, this monoidal equivalence induces an automorphism $M\in GL_3(\Fp)$ satisfying $M^*\omega_v=\omega_u\,\partial\beta$. It must respect both the quadratic and alternating cohomological invariants, giving $Q(Ma)=Q(a)$ and $v\det M=u$ by Lemmas~\ref{lem:pointed} and~\ref{lem:invariants}. The first identity forces $\det M=\pm1$, while the second gives $u=\pm v$, proving Theorem~\ref{thm:step2}, namely $\cC_u\simeq_{\text{br}}\cC_v$ forces $u=\pm v$. This proves part (B).

We further prove Theorem~\ref{thm:observable} in which we convert the Borromean three-component amplitude into an uncolored surgery partition function that distinguishes the categories precisely modulo $u\mapsto-u$. This provides an independent proof of part (B).

In summary, our construction fixes the $\operatorname{Sym}^2(V^\vee)$ component, which controls all rank-$\le2$ restrictions and hence all one- and two-component RT link invariants, while varying the $\Lambda^3(V^\vee)$ component, which is invisible to such probes but remains detectable by the full braided category.

The work is organized as follows. In Section~\ref{sec:prelim}, we introduce the categorical conventions and link invariants. In Section~\ref{sec:cocycles}, we establish the cocycle identities and their behavior under restriction. In Section~\ref{sec:step1}, we prove equality of the link invariants. In Section~\ref{sec:step2}, we derive the obstruction to braided equivalence. In Section~\ref{sec:final}, we complete the proofs of the Main Theorem and its corollary. Finally, in Section~\ref{sec:observable}, we compute the closed three-manifold partition function and explain the additional topological response it detects. 

%======================================================================

\section{Categorical conventions and link invariants}\label{sec:prelim}

We use the standard conventions for $\CC$-linear fusion categories, spherical structures and centers in~\cite[\S2]{DGNO10}. We review only the choices and formulae needed below.

\subsection{Pointed categories and duality}\label{subsec:pointed-conventions}
For a finite abelian group $\Gamma$, let $\nu:\Gamma^3\to\CC^\times$ be a normalized $3$-cocycle, so that $\nu$ is one whenever an argument is zero and
\begin{equation}\label{eq:cocycle}
    \nu(y,z,w)\nu(x,y+z,w)\nu(x,y,z)=\nu(x+y,z,w)\nu(x,y,z+w).
\end{equation}
\begin{definition}\label{def:Vec}
The objects of $\Vect_\Gamma^\nu$ are finite-dimensional $\Gamma$-graded vector spaces $X=\bigoplus_{x\in\Gamma}X_x$, and the morphisms are degree-preserving linear maps. The tensor product is
\begin{equation*}
    (X\otimes Y)_z=\bigoplus_{x+y=z}X_x\otimes Y_y,
\end{equation*}
with the usual tensor product of linear maps. The unit is $\mathbf1=\mathbb C$ concentrated in degree $0$, with the standard unit constraints~\cite[Remark~2.6.3]{EGNO}. The constraint on associativity is
\begin{equation*}
    \alpha_{X,Y,Z}:(X\otimes Y)\otimes Z\longrightarrow X\otimes(Y\otimes Z),\qquad \alpha_{X,Y,Z}((v\otimes w)\otimes t) =\nu(x,y,z)\,v\otimes(w\otimes t),
\end{equation*}
for $v\in X_x$, $w\in Y_y$ and $t\in Z_z$.
\end{definition}

The simple objects of $\Vect_\Gamma^\nu$ are $\delta_x:=\CC 1_x, x\in\Gamma$, and we write $\iota_{x,y}:\delta_x\otimes\delta_y\xrightarrow{\sim}\delta_{x+y}$ for the map $1_x\otimes1_y\mapsto1_{x+y}$. We equip $\Vect_\Gamma^\nu$ with its standard rigid structure, in which the dual of a homogeneous object of degree $x$ has degree $-x$. Equivalently, for $X=\bigoplus_{x\in\Gamma}X_x$, $(X^*)_{-x}=(X_x)^*$. We further equip $\Vect_\Gamma^\nu$ with its unique positive spherical structure $\psi^{\Gamma,\nu}$, for which $d(\delta_x)=1$ since $\Vect_\Gamma^\nu$ is pointed~\cite[Example~2.26 and Remark~2.25]{DGNO10}~\cite[Proposition~8.23]{ENO}.

For every subgroup $P\le\Gamma$, the full fusion subcategory consisting
of objects supported on $P$ is canonically~\cite[Exercise~4.11.2]{EGNO}
\begin{equation*}
    \Vect_P^{\nu|_{P^3}}\subset\Vect_\Gamma^\nu.
\end{equation*}
Its monoidal and rigid structures are the restrictions of those on $\Vect_\Gamma^\nu$. Moreover, the restriction of $\psi^{\Gamma,\nu}$ to this subcategory is positive, and hence, by uniqueness of the positive spherical structure~\cite[Corollary~2.24 and Remark~2.25]{DGNO10},
\begin{equation*}
    \psi^{\Gamma,\nu}_X=\psi^{P,\nu|_{P^3}}_X
\end{equation*}
for every $P$-supported object $X$.

\subsection{Center and canonical ribbon structure}\label{subsec:center-conventions}
Let $\cC$ be a fusion category with associativity constraint $\alpha$
and the standard unit constraints, as for $\Vect_\Gamma^\nu$. We use the
following convention \cite[\S2.9]{DGNO10}
\cite[Definition~XIII.4.1]{Kas}.

\begin{definition}\label{def:center}
The Drinfel'd center of $\cC$ is the category $\cZ(\cC)$ with the following data.
Each object of $\cZ(\cC)$ is a pair $(X,c_{-,X})$, where $X\in\cC$ and $c_{V,X}:V\otimes X\to X\otimes V$ is a family of isomorphisms natural in $V\in\cC$, called a \emph{half-braiding}. It satisfies $c_{\one,X}=\id_X$ and, for all $V,W\in\cC$,
\begin{equation}\label{eq:halfbraiding}
    c_{V\otimes W,X}=\alpha_{X,V,W}(c_{V,X}\otimes\id_W)\alpha^{-1}_{V,X,W}(\id_V\otimes c_{W,X})\alpha_{V,W,X}.
\end{equation}
A morphism $(X,c_{-,X})\to(X',c_{-,X'})$ is a morphism $f:X\to X'$ in $\cC$ such that, for every $V$,
\begin{equation*}
    (f\otimes\id_V)c_{V,X}=c_{V,X'}(\id_V\otimes f).
\end{equation*}
The tensor product is $(X,c_{-,X})\otimes(Y,c_{-,Y})=(X\otimes Y,c_{-,X\otimes Y})$, where
\begin{equation}\label{eq:tensorhalf}
    c_{V,X\otimes Y}=\alpha^{-1}_{X,Y,V}(\id_X\otimes c_{V,Y})\alpha_{X,V,Y}(c_{V,X}\otimes\id_Y)\alpha^{-1}_{V,X,Y}.
\end{equation}
The unit is $(\one,\id)$, and the associativity and unit constraints are inherited from $\cC$. The braiding is
\begin{equation}\label{eq:braiding}
    c^{\cZ}_{(X,c_{-,X}),(Y,c_{-,Y})}=c_{X,Y}:X\otimes Y\longrightarrow Y\otimes X,
\end{equation}
using the half-braiding of the \emph{second} object evaluated at the first. The forgetful tensor functor $\Forg:\cZ(\cC)\to\cC$, called \emph{central functor} in~\cite[Definition~8.8.6]{EGNO}, sends $(X,c_{-,X})$ to $X$ and acts identically on underlying morphisms.
\end{definition}

The center $\cZ(\Vect_\Gamma^\nu)$ carries the canonical positive spherical structure induced from that of $\Vect_\Gamma^\nu$, and is modular \cite[\S2.9 and Remark~2.35]{DGNO10}. We use throughout the corresponding canonical ribbon twist
\begin{equation*}
    \theta_X=\upsilon_X^{-1}\psi^{\cZ}_X,
\end{equation*}
where $\upsilon_X:X\to X^{**}$ is the Drinfeld isomorphism~\cite[\S2.8.2, equations~(12)--(16)]{DGNO10}.

For an abelian group $\Gamma$, the structure of $\cZ(\Vect_\Gamma^\nu)$ is given by the following proposition.
\begin{proposition}[{\cite[\S4]{NN}, \cite[\S2.9 and Remark~4.65]{DGNO10}}]
\label{prop:centerabelian}
Let $\Gamma$ be a finite abelian group and $\nu$ a normalized $3$-cocycle. The center $\cZ(\Vect_\Gamma^\nu)$ has the following standard description. For each $a\in\Gamma$, the function
\begin{equation}\label{eq:multiplier}
    \gamma^\nu_a(x,y)=\frac{\nu(x,a,y)}{\nu(a,x,y)\nu(x,y,a)}.
\end{equation}
is a normalized $2$-cocycle.
\begin{enumerate}[(i)]
\item Objects are equivalently graded vector spaces $X=\bigoplus_aX_a$ with a $\gamma_a^\nu$-projective action on each $X_a$: $\rho_X(0)=\id$ and $\rho_X(x)\rho_X(y)|_{X_a}=\gamma_a^\nu(x,y)\rho_X(x+y)|_{X_a}$. Morphisms are graded intertwiners, and each $X_a$ is a center subobject. The simple objects are therefore indexed by pairs $(a,\pi)$, where $\pi$ is an irreducible $\gamma_a^\nu$-projective representation of $\Gamma$. The positive categorical dimension of $X$ is $d(X)=\dim_{\CC}X$; in particular, an object is invertible if and only if its underlying vector space is one-dimensional.

\item The corresponding half-braiding is
\begin{equation}\label{eq:halfexplicit}
    c_{U,X}(w\otimes v)=\rho_X(x)v\otimes w, \qquad w\in U_x,\quad v\in X.
\end{equation}
The tensor product is determined by~\eqref{eq:tensorhalf}, and the braiding is $c^{\cZ}(v\otimes w)=\rho_Y(a)w\otimes v$ for $v\in X_a$, $w\in Y$.

\item The full zero-flux subcategory is
$\cE\cong\Rep(\Gamma)$ as a braided monoidal category, whose tensor action is $\rho_{X\otimes Y}(x)=\rho_X(x)\otimes\rho_Y(x)$, associator is the identity, and braiding is the flip.
\end{enumerate}
\end{proposition}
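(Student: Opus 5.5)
The plan is to derive everything from the definition of $\cZ(\Vect_\Gamma^\nu)$ (Definition~\ref{def:center}) by analyzing half-braidings against the simple objects $\delta_x$ of $\Vect_\Gamma^\nu$. First, for any object $(X,c_{-,X})$, decompose $X=\bigoplus_a X_a$. Naturality of $c_{V,X}$ in $V$ reduces the data to the isomorphisms $c_{\delta_x,X}:\delta_x\otimes X\to X\otimes\delta_x$. Since these are degree-preserving and $\Gamma$ is abelian, each one preserves the summand $\delta_x\otimes X_a$. It therefore becomes a linear operator $\rho_X(x)$ on each $X_a$, defined by $c_{\delta_x,X}(1_x\otimes v)=\rho_X(x)v\otimes 1_x$. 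This gives~\eqref{eq:halfexplicit}, and extending additively to arbitrary $U$ is automatic from naturality.

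Next I would impose the hexagon~\eqref{eq:halfbraiding} with $V=\delta_x$, $W=\delta_y$ and $X$ concentrated in degree $a$. The identifications $\iota_{x,y}$ turn the left side into $\rho_X(x+y)$. The right side becomes $\rho_X(x)\rho_X(y)$ times the three associator scalars $\nu(a,x,y)$, $\nu(x,a,y)^{-1}$ and $\nu(x,y,a)$. Solving gives $\rho_X(x)\rho_X(y)=\gamma_a^\nu(x,y)\rho_X(x+y)$ with $\gamma^\nu_a$ as in~\eqref{eq:multiplier}, and $c_{\one,X}=\id$ gives $\rho_X(0)=\id$. The main obstacle is getting the order and inversion of the three $\nu$ factors exactly right under the paper's conventions. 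I would check it carefully by tracking a basis vector $1_x\otimes1_y\otimes v$ through each map.

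Several consequences then follow. Since $\rho_X(x)$ is invertible, associativity of operator composition forces $\gamma_a^\nu$ to be a $2$-cocycle; this can also be checked directly from~\eqref{eq:cocycle}. Normalization follows from normalization of $\nu$. Morphisms in the center are graded maps commuting with every $c_{\delta_x}$, which means graded intertwiners. The projection onto $X_a$ is such a morphism, so $X_a$ is a center subobject. Hence the simple objects are the pairs $(a,\pi)$, and this proves (i).

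The braiding formula in (ii) is~\eqref{eq:braiding} applied with $v\in X_a$. For dimensions, the canonical spherical structure on the center makes $\Forg$ a pivotal functor, so $d(X)=d_{\Vect}(\Forg X)=\sum_a\dim X_a$ because $d(\delta_x)=1$. Invertibility is equivalent to $d=1$ in a pseudo-unitary (positive) category. For (iii), the associator $\nu(0,0,0)=1$ on zero-flux objects, and $\gamma_0^\nu\equiv1$ by normalization. So zero-flux objects are honest representations. The formula~\eqref{eq:tensorhalf} with all associators trivial gives the diagonal action, and~\eqref{eq:braiding} with $a=0$ gives $\rho_Y(0)=\id$, i.e.\ the flip. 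This identifies $\cE\cong\Rep(\Gamma)$ as a symmetric braided category.
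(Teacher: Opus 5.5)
Your proposal is correct, and it differs from the paper mainly in that the paper gives no derivation. The paper cites \cite[\S4]{NN} and \cite[\S2.9 and Remark~4.65]{DGNO10}, notes only that its $\gamma^\nu_a$ is the inverse of the multiplier in \cite{NN} because of its associator and half-braiding conventions, and takes $d(X)=\dim_{\CC}X$ from \cite[Remark~2.35(i)]{DGNO10}. You instead derive the whole description from Definition~\ref{def:center}, and your bookkeeping is right. Tracking $(1_x\otimes1_y)\otimes v$ through \eqref{eq:halfbraiding} gives $\rho_X(x+y)=\nu(a,x,y)\,\nu(x,a,y)^{-1}\nu(x,y,a)\,\rho_X(x)\rho_X(y)$, which is exactly \eqref{eq:multiplier}. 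Your computation is therefore precisely the verification of the convention change the paper asserts without proof. That check is worth having, because Lemma~\ref{lem:multiplier} and everything downstream depend on the exact order and inversion of the $\nu$ factors. The citation buys brevity; your route buys a self-contained, convention-proof check.

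Two small points to tighten. First, the ``associativity of composition'' argument for the $2$-cocycle property says nothing unless you already know some center object has a nonzero flux-$a$ component (e.g.\ the induction of $\delta_a$). So lean on the direct check, which does work. For abelian $\Gamma$, the coboundary of $\gamma^\nu_a$ at $(x,y,z)$ is the alternating product of the defects of \eqref{eq:cocycle} at $(a,x,y,z)$, $(x,a,y,z)$, $(x,y,a,z)$ and $(x,y,z,a)$. Terms such as $\nu(x+a,y,z)$ against $\nu(a+x,y,z)$ cancel by commutativity. Second, the word ``equivalently'' in (i) also requires the converse: any $\gamma^\nu_a$-projective action defines a half-braiding via \eqref{eq:halfexplicit}. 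This follows because your hexagon computation is reversible and both sides of \eqref{eq:halfbraiding} are natural and additive in $V$ and $W$.
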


The multiplier~\eqref{eq:multiplier} is the inverse of the multiplier in \cite[\S4, equations~(19)--(21) and Remark~4.1]{NN}, where $\rho(x+y)=\beta_a(x,y)\rho(x)\rho(y)$; this accounts for our associator and half-braiding conventions. The dimension statement uses~\cite[Remark~2.35(i)]{DGNO10} and the positive spherical structure fixed in Section~\ref{subsec:center-conventions}.
%======================================================================

\subsection{Link evaluations and functoriality}\label{subsec:RT-conventions}
For a $\CC$-linear ribbon fusion category $\cD$, a framed oriented $n$-component link $L$, and colors $X_1,\ldots,X_n\in\cD$, write
\begin{equation*}
    \RT_\cD(L;X_1,\ldots,X_n)\in\End_\cD(\mathbf1)\cong\CC
\end{equation*}
for its Reshetikhin--Turaev evaluation.  Replacing the color of any component by an isomorphic object does not change the evaluation~\cite[Chapter~I, \S2.9, Exercise~1]{Tu}. We suppress associators and unit constraints from the notation when they are determined by coherence. The ribbon-graph construction is functorial under $\CC$-linear ribbon functors. In particular, if $G:\cC\to\cD$ is such a strict ribbon functor in the sense of~\cite[\S2.4]{KT}, then
\begin{equation*}
    \RT_\cD(L;GX_1,\ldots,GX_n)=\RT_\cC(L;X_1,\ldots,X_n)
\end{equation*}
for every colored framed oriented link $L$.

\subsection{\texorpdfstring{Normalizations of $S,T,W$}{Normalizations of S,T,W}}
\label{subsec:STW-conventions}
For simple colors $X_i\in \operatorname{Irr}(\cD)$, we choose
\begin{gather*}
    D=\Big(\sum_Xd(X)^2\Big)^{1/2}>0,\qquad d(X)=\RT(O^0;X),\qquad \theta_X=\frac{\RT(O^{+1};X)}{d(X)},\\
    \widetilde S_{XY}=\RT(\mathrm{Hopf};X,Y),\qquad S=D^{-1}\widetilde S,\qquad T=\operatorname{diag}(\theta_X),\qquad  W_{XY}=\frac{\theta_X}{\theta_Y}\widetilde W_{XY}.
\end{gather*}
Here $O^m$ is the $m$-framed unknot, and $\widetilde W$ evaluates the specific framed oriented Whitehead diagram of~\cite[\S2.1.2 and Definition~2.1]{BDGRTW}. Consequently, a single vacuum-preserving bijection under which all one- and two-component link evaluations agree makes $d,\theta,D$ and all five matrices $\widetilde S,S,T,\widetilde W,W$ agree under the same bijection.

%======================================================================

\section{Cocycles and restriction to subgroups}\label{sec:cocycles}
%======================================================================

We show that the $u$-dependence of $\omega_u$ disappears after restriction to any subgroup $P\leq V$ of rank at most two.  This is the key input for comparing all one- and two-component link invariants in a common restricted ribbon category.

\begin{lemma}\label{lem:cocycle}
For every $u\in\Fp$, $\kappa$ and $\tau_u$ (hence $\omega_u$) are normalized $3$-cocycles on $V$.
\end{lemma}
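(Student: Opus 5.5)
The plan is to treat the two factors separately. The cocycle identity \eqref{eq:cocycle} is multiplicative and the set of normalized $3$-cocycles is a group under pointwise multiplication, so $\omega_u=\kappa\tau_u$ is a normalized $3$-cocycle as soon as $\kappa$ and $\tau_u$ are. Normalization is immediate for both. If any of $a,b,c$ is zero, then $\det(a,b,c)=0$. For $\kappa$, if $a=0$ every $[a_i]=0$, and if $b=0$ or $c=0$ every carry $C_p(b_i,c_i)$ vanishes because $[0]=0$.

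For $\tau_u$, the exponent $\frac u6\det(a,b,c)$ is an $\Fp$-valued trilinear form $F$. Every $\Fp$-valued trilinear form gives a $3$-cocycle $\zeta^{F}$. Writing \eqref{eq:cocycle} additively in the exponent, we need
\begin{equation*}
F(y,z,w)+F(x,y+z,w)+F(x,y,z)=F(x+y,z,w)+F(x,y,z+w).
\end{equation*}
Expand both sides by linearity in each slot. The left side becomes $F(y,z,w)+F(x,y,w)+F(x,z,w)+F(x,y,z)$, and the right side becomes $F(x,z,w)+F(y,z,w)+F(x,y,z)+F(x,y,w)$, so the two sides agree. This step is purely formal; the factor $u/6$ plays no role here and only matters later for the alternating invariant.

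For $\kappa$, the function factors as a product over coordinates, $\kappa=\prod_{i=1}^3 \pi_i^*\kappa_1$. Here $\pi_i:V\to\Fp$ is the $i$-th coordinate projection and $\kappa_1(r,s,t)=\zeta^{[r]C_p(s,t)}$ on $\Fp$. Pullbacks of cocycles along homomorphisms are cocycles, so it suffices to show that $\kappa_1$ is a $3$-cocycle on $\mathbb Z/p$. This is the standard generator of $H^3(\mathbb Z/p,U(1))$, but I would verify it directly.

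Work with integer exponents modulo $p$. The key facts are that $C_p$ is the standard $2$-cocycle, i.e. the carry cocycle representing the extension $\mathbb Z\to\mathbb Z/p$, so
\begin{equation*}
C_p(y,z)+C_p(x,y+z)=C_p(x+y,z)+C_p(x,y)\quad\text{in }\mathbb Z,
\end{equation*}
and that $[x+y]\equiv[x]+[y]\pmod p$. The required identity in exponents is
\begin{equation*}
[y]C(z,w)+[x]C(y+z,w)+[x]C(y,z)\equiv[x+y]C(z,w)+[x]C(y,z+w)\pmod p.
\end{equation*}
On the right, replace $[x+y]$ by $[x]+[y]$, which is valid modulo $p$. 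Then cancel $[y]C(z,w)$ from both sides. What remains is $[x]$ times the carry identity
\begin{equation*}
C(y+z,w)+C(y,z)=C(z,w)+C(y,z+w),
\end{equation*}
which holds in $\mathbb Z$.

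The only point needing care, and the expected obstacle, is keeping integer versus $\Fp$ arithmetic straight. First, the carry identity must hold in $\mathbb Z$, not merely modulo $p$. It follows by computing $[y]+[z]+[w]-[y+z+w]$ in two ways and dividing by $p$. Second, the replacement of $[x+y]$ by $[x]+[y]$ costs $p\,C_p(x,y)\,C(z,w)$, which is divisible by $p$ and therefore harmless in the exponent of $\zeta$.

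Finally, the product of the three coordinate cocycles is again a cocycle. Together with the trilinear case for $\tau_u$, this gives that $\omega_u$ is a normalized $3$-cocycle for every $u\in\Fp$.
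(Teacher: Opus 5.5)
Your proof is correct and follows essentially the same route as the paper. Normalization is checked by inspection, $\tau_u$ is handled by the trilinear expansion, and $\kappa$ is reduced to a single coordinate where the integer carry identity applies, with error term $p\,C_p(x,y)\,C_p(z,w)\equiv 0 \pmod p$. The only difference is cosmetic: you phrase the coordinate reduction as a pullback along the projections $\pi_i$, whereas the paper simply treats one summand $f_i$ of the exponent.
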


\begin{proof}
Normalization holds because $[0]=0$, $C_p(0,s)=C_p(r,0)=0$ and $\det$ vanishes when an argument is $0$. Writing $\nu=\zeta^{f}$ with $f:V^3\to\Fp$, we can express~\eqref{eq:cocycle} as
\begin{equation*}
    (\delta f)(x,y,z,w):=f(y,z,w)-f(x+y,z,w)+f(x,y+z,w)-f(x,y,z+w)+f(x,y,z)=0\ \text{ in }\Fp .
\end{equation*}
For $\tau_u$, $f=\frac u6\det$ is trilinear, and expanding the second, third and fourth terms by linearity shows that the eight resulting terms cancel in pairs.
For $\kappa$, $f=\sum_if_i$ with $f_i(x,y,z)=[x_i]C_p(y_i,z_i)$, and it suffices to treat one $f_i$. For $b,c,d\in\Fp$ both $C_p(b,c)+C_p(b+c,d)$ and $C_p(c,d)+C_p(b,c+d)$ equal $([b]+[c]+[d]-[b+c+d])/p$. Hence, for $a,b,c,d\in\Fp$ and $F(a,b,c)=[a]C_p(b,c)$,
\begin{align*}
(\delta F)(a,b,c,d)&=([b]-[a+b])C_p(c,d)+[a]\big(C_p(b+c,d)-C_p(b,c+d)+C_p(b,c)\big)\\
&=([a]+[b]-[a+b])\,C_p(c,d)=p\,C_p(a,b)\,C_p(c,d)\equiv0\pmod p. \qedhere
\end{align*}
\end{proof}

For $a\in V$ define $g_a:V\to\CC^\times$ by
\begin{equation*}
g_a(x):=\eta^{-\sum_{i=1}^3[a_i][x_i]} \ ,
\end{equation*}
notice that here the integer exponent is \emph{not} reduced modulo $p$ but modulo $p^2$, and for any $g:V\to\CC^\times$ we define $(\partial g)(x,y):=g(x)g(y)g(x+y)^{-1}$.

\begin{lemma}\label{lem:multiplier}
For $u\in\Fp$ and $a,x,y\in V$, the multiplier~\eqref{eq:multiplier} can be written as
\begin{equation*}
    \gamma^{\omega_u}_a(x,y)=\zeta^{-\sum_i[a_i]C_p(x_i,y_i)-\frac u2\det(a,x,y)}=(\partial g_a)(x,y)\cdot b^u_a(x,y),\qquad b^u_a(x,y):=\zeta^{-\frac u2\det(a,x,y)} .
\end{equation*}
\end{lemma}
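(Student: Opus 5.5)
We compute the multiplier directly from the definition~\eqref{eq:multiplier}, treating the two factors of $\omega_u=\kappa\tau_u$ separately. The multiplier is multiplicative in the cocycle, so $\gamma^{\omega_u}_a=\gamma^{\kappa}_a\,\gamma^{\tau_u}_a$. We then identify each factor with the corresponding piece of the claimed formula.

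\emph{Alternating factor.} With $f=\frac u6\det$ trilinear and alternating, the exponent of $\gamma^{\tau_u}_a(x,y)$ is
\begin{equation*}
\tfrac u6\big(\det(x,a,y)-\det(a,x,y)-\det(x,y,a)\big).
\end{equation*}
Since $\det(x,a,y)=-\det(a,x,y)$ and $\det(x,y,a)=\det(a,x,y)$, this equals $-\frac{3u}{6}\det(a,x,y)=-\frac u2\det(a,x,y)$. This gives the factor $b^u_a(x,y)$ and the $\det$ term in the first expression.

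\emph{Quadratic factor.} We work one coordinate at a time with $F(r,s,t)=[r]C_p(s,t)$ for $r=a_i$, $s=x_i$, $t=y_i$. The exponent of $\gamma^{\kappa}_a$ in coordinate $i$ is
\begin{equation*}
[x_i]C_p(a_i,y_i)-[a_i]C_p(x_i,y_i)-[x_i]C_p(y_i,a_i).
\end{equation*}
Because $C_p$ is symmetric, the first and third terms cancel, leaving $-[a_i]C_p(x_i,y_i)$. Summing over $i$ gives the first displayed expression.

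\emph{The coboundary form.} It remains to show $\zeta^{-\sum_i[a_i]C_p(x_i,y_i)}=(\partial g_a)(x,y)$. The exponent of $\eta$ in $(\partial g_a)(x,y)$ is
\begin{equation*}
-\sum_i[a_i]\big([x_i]+[y_i]-[x_i+y_i]\big)=-p\sum_i[a_i]C_p(x_i,y_i),
\end{equation*}
computed in $\mathbb Z$ by the definition of the carry. Since $\eta^p=\zeta$, the two expressions agree.

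The only delicate point is this last step. The exponent of $\eta$ must be kept as an integer and not reduced modulo $p$, which is exactly the caveat stated before the lemma. The other steps are short sign bookkeeping; in particular they use the symmetry of $C_p$ and the cyclic and transposition signs of $\det$.
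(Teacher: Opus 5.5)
Your proposal is correct and follows the paper's proof essentially verbatim. You treat the $\kappa$ and $\tau_u$ contributions separately, using the symmetry of $C_p$ and the transposition/cyclic signs of $\det$. You then get the coboundary form from the integer-exponent identity $[x_i]+[y_i]-[x_i+y_i]=p\,C_p(x_i,y_i)$ together with $\eta^p=\zeta$.
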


\begin{proof}
By \eqref{eq:multiplier}, the contribution of $\kappa$ to the exponent of $\zeta$ is
\begin{equation*}
    \sum_i\big([x_i]C_p(a_i,y_i)-[a_i]C_p(x_i,y_i)-[x_i]C_p(y_i,a_i)\big)=-\sum_i[a_i]C_p(x_i,y_i),
\end{equation*}
because $C_p$ is symmetric. For the $\tau_u$ factor, $\det(x,a,y)=-\det(a,x,y)$ and $\det(x,y,a)=\det(a,x,y)$ give
\begin{equation*}
    \frac u6\big(\det(x,a,y)-\det(a,x,y)-\det(x,y,a)\big)=\frac u6(-3)\det(a,x,y)=-\frac u2\det(a,x,y).
\end{equation*}
Finally, the definition of the carry gives
\begin{equation*}
    (\partial g_a)(x,y)=\eta^{-\sum_i[a_i]([x_i]+[y_i]-[x_i+y_i])}=\eta^{-p\sum_i[a_i]C_p(x_i,y_i)}=\zeta^{-\sum_i[a_i]C_p(x_i,y_i)}.
\end{equation*}
\end{proof}

\begin{lemma}\label{lem:rank2}
Let $P\leq V$ be a subgroup with $\dim_{\Fp}P\leq2$. For every $u\in\Fp$, one has $\det(x,y,z)=0, \tau_u(x,y,z)=1$ for all $x,y,z\in P$, and hence $\omega_u|_{P^3}=\kappa|_{P^3}$. Moreover, $\gamma^{\omega_u}_a(x,y)=(\partial g_a)(x,y)$ for all $a,x,y\in P$, thus $\Vect_P^{\omega_u|_{P^3}}=\Vect_P^{\kappa|_{P^3}}$ and $\cZ\!\left(\Vect_P^{\omega_u|_{P^3}}\right)=\cZ\!\left(\Vect_P^{\kappa|_{P^3}}\right)$ as equalities of categories for all $u$.
\end{lemma}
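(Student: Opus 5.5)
The proof reduces to one elementary observation: three vectors in a subspace of dimension at most two are linearly dependent. For $x,y,z\in P$ with $\dim_{\Fp}P\le2$, the columns of the matrix $(x\,|\,y\,|\,z)$ lie in $P$, so its rank is at most two and $\det(x,y,z)=0$ in $\Fp$. Then Definition~\ref{def:cocycles} gives $\tau_u(x,y,z)=\zeta^{\frac u6\cdot 0}=1$, and hence $\omega_u|_{P^3}=\kappa|_{P^3}\cdot\tau_u|_{P^3}=\kappa|_{P^3}$ for every $u\in\Fp$.

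For the multiplier, I will use the closed formula of Lemma~\ref{lem:multiplier}, $\gamma^{\omega_u}_a(x,y)=(\partial g_a)(x,y)\,b^u_a(x,y)$ with $b^u_a(x,y)=\zeta^{-\frac u2\det(a,x,y)}$. For $a,x,y\in P$ the same rank argument gives $\det(a,x,y)=0$, so $b^u_a\equiv1$ on $P$ and $\gamma^{\omega_u}_a(x,y)=(\partial g_a)(x,y)$. The paper states this identity only for $x,y\in P$, which is the natural range once we pass to $\cZ(\Vect_P^{\cdot})$, where the projective actions are of $P$ rather than $V$.

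The equalities of categories then follow from the definitions. Definition~\ref{def:Vec} builds $\Vect_P^\nu$ entirely from the group $P$ and the function $\nu$ on $P^3$: the objects, morphisms, tensor product and unit do not involve $\nu$, and the associator is determined by $\nu$. Since $\omega_u|_{P^3}=\kappa|_{P^3}$ as functions, $\Vect_P^{\omega_u|_{P^3}}$ and $\Vect_P^{\kappa|_{P^3}}$ are literally the same monoidal category, and by the observation in Section~\ref{subsec:pointed-conventions} they also carry the same positive spherical structure.

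Definition~\ref{def:center} constructs the center functorially from the associator of the underlying category: its objects, half-braiding condition~\eqref{eq:halfbraiding}, tensor product~\eqref{eq:tensorhalf} and braiding~\eqref{eq:braiding} all involve only $\alpha$. So the two centers coincide as braided categories, and their canonical ribbon structures agree because the spherical structures do. No step is a genuine obstacle. The only point needing care is that these are equalities on the nose, which holds because every construction depends only on the restricted cocycle as a function.
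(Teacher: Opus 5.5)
Your proposal is correct and follows the paper's proof: linear dependence of three vectors in a subspace of dimension at most two gives $\det=0$ and hence $\tau_u\equiv1$ on $P^3$, and the multiplier identity follows from Lemma~\ref{lem:multiplier} because $\det(a,x,y)=0$. Your added remarks that the equalities of categories hold on the nose, since every construction depends only on the restricted cocycle as a function, spell out what the paper leaves implicit.
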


\begin{proof}
The first half is trivial, whereas the formula for the multiplier follows from Lemma~\ref{lem:multiplier} and $\det(a,x,y)=0$.
\end{proof}

%======================================================================
\section{Equality of link invariants}\label{sec:step1}
%======================================================================

\subsection{Simple objects and a common labeling}

The description of the center in Proposition~\ref{prop:centerabelian} reduces the classification of simple objects to projective representation theory. We begin with the semisimplicity argument needed for this classification.

\begin{lemma}\label{lem:maschke}
Let $H$ be a finite abelian group and $b$ a normalized $2$-cocycle on $H$ with values in roots of unity. The twisted group algebra $\CC^b[H]$, with basis $U_h$ and $U_gU_h=b(g,h)U_{g+h}$, is semisimple. Equivalently, every finite-dimensional $b$-projective representation of $H$, viewed as a $\CC^b[H]$-module via $U_h\mapsto\rho(h)$, is completely reducible.
\end{lemma}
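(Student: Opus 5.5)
The plan is to run Maschke's averaging argument, adapted to twisted group algebras. Two ingredients are needed. First, unitarity: each $U_h$ acts invertibly, and since $b$ takes values in roots of unity we can make the action unitary. Second, a Hermitian form that is invariant, obtained by averaging over $H$. Abelianness of $H$ is not actually needed; only finiteness and $|b|=1$ are used.

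First I will check that $\CC^b[H]$ is an associative unital algebra. Associativity of $(U_gU_h)U_k=U_g(U_hU_k)$ is exactly the $2$-cocycle identity $b(g,h)b(g+h,k)=b(h,k)b(g,h+k)$. Normalization gives that $U_0$ is the unit. Each $U_h$ is invertible, with $U_h^{-1}=b(h,-h)^{-1}U_{-h}$ (using $b(h,-h)=b(-h,h)$, which follows from the cocycle identity, though only a two-sided inverse up to scalar is needed). The equivalence between $b$-projective representations $\rho$ (with $\rho(0)=\id$ and $\rho(g)\rho(h)=b(g,h)\rho(g+h)$) and $\CC^b[H]$-modules via $U_h\mapsto\rho(h)$ is then immediate from the definitions. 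Under it, submodules correspond to $\rho$-stable subspaces.

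Next, given a finite-dimensional $b$-projective representation $\rho$ on $X$, I take any inner product $\langle\cdot,\cdot\rangle_0$ and average:
\[
\langle v,w\rangle:=\frac1{|H|}\sum_{g\in H}\langle\rho(g)v,\rho(g)w\rangle_0 .
\]
The key step is invariance. We have $\rho(g)\rho(h)=b(g,h)\rho(g+h)$ and $|b(g,h)|=1$, the latter because $b$ takes values in roots of unity. Hence
\[
\langle\rho(g)\rho(h)v,\rho(g)\rho(h)w\rangle_0=\langle\rho(g+h)v,\rho(g+h)w\rangle_0 ,
\]
since the scalar $b(g,h)$ cancels against its complex conjugate. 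Reindexing the sum by $g\mapsto g+h$ then shows that each $\rho(h)$ is unitary for $\langle\cdot,\cdot\rangle$.

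Then, if $Y\subseteq X$ is $\rho$-stable, its orthogonal complement $Y^\perp$ is also stable. Indeed, $\rho(h)^*=\rho(h)^{-1}$ is a scalar multiple of $\rho(-h)$, so it preserves $Y$. Induction on dimension then gives complete reducibility of every finite-dimensional module. Applying this to the regular module $\CC^b[H]$ shows that it is a direct sum of simple modules, so the algebra is semisimple. Equivalently, one can argue that the Jacobson radical acts by zero on every semisimple module, in particular on the faithful regular module.

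I expect no real obstacle here. The only point requiring care is the $|b|=1$ cancellation in the invariance step, which is exactly where the root-of-unity hypothesis enters. An alternative route would be to note that the averaging projection $\frac1{|H|}\sum_h\rho(h)\pi\rho(h)^{-1}$ is a module projection. That argument works without unitarity, because $\rho(h)\,\cdot\,\rho(h)^{-1}$ is a genuine action by conjugation, the scalars cancelling.
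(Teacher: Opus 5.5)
Your proof is correct, but it follows a different route from the paper. The paper does not average over projective representations. It lets $Z\subset\CC^\times$ be the finite group generated by the values of $b$ and forms the finite central extension $H'=Z\times H$ with product $(\lambda,g)(\lambda',h)=(\lambda\lambda' b(g,h),g+h)$. It then observes that $(\lambda,h)\mapsto\lambda U_h$ induces a surjection $\CC[H']\to\CC^b[H]$. Every $b$-projective representation thus becomes an ordinary representation of $H'$ with the same invariant subspaces, and the ordinary Maschke theorem applies.

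In the paper, the root-of-unity hypothesis is what makes $Z$, and hence $H'$, finite. In your argument it enters only through $|b(g,h)|=1$ in the unitarian trick. Your version is self-contained and needs no auxiliary group. Your alternative projection $\frac1{|H|}\sum_h\rho(h)\pi\rho(h)^{-1}$ also shows that the hypothesis can be dropped entirely, because the cocycle scalars cancel under conjugation. The paper's version instead reduces everything to a citable textbook theorem with a one-line check. As you note, both arguments work equally well for nonabelian $H$.
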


\begin{proof}
Let $Z\subset\CC^\times$ be the finite subgroup generated by the values of $b$. By the cocycle identity and normalization, $H'=Z\times H$ with multiplication $(\lambda,g)(\lambda',h)=(\lambda\lambda'\,b(g,h),\,g+h)$ is a finite group, and $(\lambda,h)\mapsto\lambda U_h$ extends linearly to a surjective algebra homomorphism $\CC[H']\to\CC^b[H]$. Hence every finite-dimensional $\CC^b[H]$-module is a $\CC[H']$-module with the same submodules. Since $\CC[H']$ is semisimple by Maschke's theorem~\cite[Theorem~4.1.1(i)]{EGHLLSVY}, such modules are completely reducible, and $\CC^b[H]$ is semisimple, both by~\cite[Proposition~3.5.8]{EGHLLSVY}.
\end{proof}

\begin{proposition}\label{prop:simples}
Let $u\in\Fp^\times$. For $\xi\in V^\vee$ let $E_\xi$ be $\CC$ in degree $0$ with $\rho(x)=\zeta^{\xi(x)}$. Then $E_\xi$ is a simple object of $\cC_u$ and $E_0=\one$. For nonzero flux, let $a\in V\setminus\{0\}$ and $k\in\Fp$ and choose $r,s\in V$ with $\det(a,r,s)=1$. Every $x\in V$ is uniquely $x=t_xa+m_xr+n_xs$ with $t_x=\det(x,r,s)$, $m_x=\det(a,x,s)$, $n_x=\det(a,r,x)$. On $\CC^p$ with basis $(e_j)_{j\in\Fp}$ put
\begin{equation*}
    \pi_{a,k}(x)e_j:=\zeta^{kt_x+\frac u2m_xn_x+un_xj}\,e_{j+m_x}.
\end{equation*}
Then $\pi_{a,k}$ is a $b^u_a$-projective representation, $\rho:=g_a\,\pi_{a,k}$ is a $\gamma^{\omega_u}_a$-projective representation, and
\begin{equation*}
    X_u(a,k):=\big(\CC^p\text{ placed in degree }a,\ \rho\big)
\end{equation*}
is a simple object of $\cC_u$ with $\rho(a)=g_a(a)\zeta^k\,\id$.

Every simple object of $\cC_u$ is isomorphic to exactly one of the objects $E_\xi$ \textup{(}$\xi\in V^\vee$\textup{)}, $X_u(a,k)$ \textup{(}$a\neq0$, $k\in\Fp$\textup{)}. The isomorphism class of $X_u(a,k)$ does not depend on the choice of $r,s$, since it is the unique simple object of flux $a$ with $\rho(a)=g_a(a)\zeta^k$.

The characters $\chi_X(x):=\Tr\rho_X(x)$ are
\begin{equation*}
\chi_{E_\xi}(x)=\zeta^{\xi(x)},\qquad
\chi_{X_u(a,k)}(x)=\begin{cases}p\,g_a(x)\,\zeta^{kt}&\text{if }x=ta,\ t\in\Fp,\\ 0&\text{if }x\notin\Fp a.\end{cases}
\end{equation*}
These characters do not depend on $u$.
\end{proposition}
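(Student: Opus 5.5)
The proof reduces everything to Proposition~\ref{prop:centerabelian}(i) together with the factorization $\gamma^{\omega_u}_a=(\partial g_a)\,b^u_a$ of Lemma~\ref{lem:multiplier}. Since $\partial g_a$ is a coboundary, $\rho\mapsto g_a^{-1}\rho$ is a bijection between $\gamma^{\omega_u}_a$-projective and $b^u_a$-projective representations. It preserves irreducibility and isomorphism classes. So it suffices to classify irreducible $b^u_a$-projective representations of $V$ for each $a$.

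\textbf{Flux zero.} For $a=0$ the multiplier is trivial. The irreducible representations of the abelian group $V$ are the characters $x\mapsto\zeta^{\xi(x)}$ with $\xi\in V^\vee$, which gives the objects $E_\xi$, and $E_0=\one$.

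\textbf{Nonzero flux: the representation.} Let $a\ne0$ and write $x$ in the coordinates $(t_x,m_x,n_x)$; these are linear in $x$. Then
\begin{equation*}
\det(a,x,y)=m_xn_y-n_xm_y,
\end{equation*}
and $b^u_a(x,y)=\zeta^{-\frac u2(m_xn_y-n_xm_y)}$. I will verify directly that
\begin{equation*}
\pi_{a,k}(x)\pi_{a,k}(y)e_j=b^u_a(x,y)\,\pi_{a,k}(x+y)e_j .
\end{equation*}
Composing gives the exponent $k(t_x+t_y)+\frac u2(m_xn_x+m_yn_y)+un_yj+un_x(j+m_y)$. Subtracting the exponent of $\pi_{a,k}(x+y)e_j$ leaves $un_xm_y-\frac u2(m_xn_y+m_yn_x)=-\frac u2(m_xn_y-n_xm_y)$, as required. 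Normalization is clear. Also $\pi_{a,k}(a)=\zeta^k\id$, because $a$ has coordinates $t=1$, $m=n=0$.

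\textbf{Nonzero flux: irreducibility and completeness.} On $\spanop(r,s)$ the cocycle is a Heisenberg-type cocycle, and its commutator pairing $\zeta^{-u(m_xn_y-n_xm_y)}$ is nondegenerate because $u\ne0$. The operators $\pi(s)$ act diagonally with distinct eigenvalues $\zeta^{uj}$, and $\pi(r)$ permutes the $e_j$ cyclically. Hence any invariant subspace is spanned by some of the $e_j$ and is closed under shifts, so it is everything. For completeness, $\dim\CC^{b}[V]=p^3$ and the radical of the commutator form of $b^u_a$ is $\Fp a$. The center of the twisted group algebra therefore has dimension $p$, and by Lemma~\ref{lem:maschke} there are exactly $p$ irreducibles, which are distinguished by the scalar $\rho(a)$ (central character on $\Fp a$). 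The $\pi_{a,k}$ for $k\in\Fp$ have distinct scalars $\zeta^k$, so they exhaust the irreducibles. Alternatively, count dimensions: $p\cdot p^2=p^3$. This also shows independence from the choice of $r,s$, since $X_u(a,k)$ is characterized by $\rho(a)=g_a(a)\zeta^k$.

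\textbf{Characters.} The trace of $\pi_{a,k}(x)$ vanishes unless $m_x=0$, because otherwise $\pi_{a,k}(x)$ permutes the basis without fixed points. If $m_x=0$ and $n_x\ne0$, the trace is a full sum $\sum_j\zeta^{un_xj}=0$. If $x=ta$, the trace is $p\zeta^{kt}$. Multiplying by $g_a(x)$ gives the stated formula. Every ingredient is visibly $u$-free, which proves the final claim.

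The main obstacle is the completeness and uniqueness count. I would handle it through the center of $\CC^{b}[V]$, using the standard fact that basis elements $U_x$ with $x$ outside the radical of the commutator pairing contribute nothing to the center.
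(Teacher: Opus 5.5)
Your proposal is correct and follows essentially the same route as the paper. Both verify the $b^u_a$-cocycle identity directly, get irreducibility from the diagonal operator $\pi_{a,k}(s)$ (distinct eigenvalues) together with the cyclic shift $\pi_{a,k}(r)$, prove completeness from semisimplicity of $\CC^{b^u_a}[V]$ plus a count, and use the same trace computation. The paper counts only via $\sum(\dim)^2=p\cdot p^2=p^3$, which you also give; your count via the $p$-dimensional center (the radical $\Fp a$ of the commutator pairing) is a valid alternative that yields the number of irreducibles directly.
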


\begin{proof}
By Proposition~\ref{prop:centerabelian}, the simple objects of $\cC_u$ of flux $a$ are classified by the irreducible $\gamma_a^{\omega_u}$-projective representations of $V$. For $a=0$, one has $\gamma^{\omega_u}_0\equiv1$, so the irreducibles are precisely the characters $x\mapsto\zeta^{\xi(x)}$, $\xi\in V^\vee$ since $V$ is finite abelian. These give the $E_\xi$, and~\eqref{eq:halfexplicit} gives $E_0=\one$.

Now let $a\neq0$. A direct substitution in the defining formula for $\pi_{a,k}$ gives
\begin{equation*}
	\pi_{a,k}(x)\pi_{a,k}(y)=\zeta^{-\frac u2\det(a,x,y)}\pi_{a,k}(x+y)=b_a^u(x,y)\pi_{a,k}(x+y).
\end{equation*}
Hence Lemma~\ref{lem:multiplier} implies that $\rho=g_a\pi_{a,k}$ is a $\gamma_a^{\omega_u}$-projective representation. Moreover,
\begin{equation*}
	\pi_{a,k}(s)e_j=\zeta^{uj}e_j,\qquad \pi_{a,k}(r)e_j=e_{j+1}.
\end{equation*}
Since $u\neq0$, the first operator has $p$ distinct eigenspaces and the second permutes them transitively. Thus $\pi_{a,k}$, and hence $\rho$, is irreducible and they are pairwise non-isomorphic since $\pi_{a,k}(a)=\zeta^k\,\id$.

By Lemma~\ref{lem:maschke}, $\CC^{b_a^u}[V]$ is semisimple of dimension $p^3$, so the dimensions of its simple modules satisfy $\sum_U(\dim U)^2=p^3$~\cite[Proposition~3.5.8]{EGHLLSVY}. The $p$ irreducibles above already account for this dimension:
\begin{equation*}
	\sum_{k\in\Fp}(\dim\pi_{a,k})^2=p\cdot p^2=p^3.
\end{equation*}
Therefore they exhaust the irreducible $b_a^u$-projective representations.  Multiplication by the scalar cochain $g_a$ gives an equivalence between $b_a^u$-projective and $\gamma_a^{\omega_u}$-projective representations, hence the $\rho_{a,k}=g_a\pi_{a,k}$ exhaust the irreducible $\gamma_a^{\omega_u}$-projective representations. By Proposition~\ref{prop:centerabelian}, the $X_u(a,k)$ therefore exhaust the simple objects of flux $a$. At $x=a$ one has
\begin{equation*}
	\rho(a)=g_a(a)\zeta^k\,\id,
\end{equation*}
which distinguishes the $p$ classes at fixed flux and also shows that the isomorphism class of $X_u(a,k)$ is independent of the auxiliary choice of $r,s$.

Finally, the defining formula for $\pi_{a,k}$ gives
\begin{equation*}
	\Tr\pi_{a,k}(x)=\delta_{m_x,0}\,\zeta^{kt_x}\sum_{j\in\Fp}\zeta^{un_xj}=p\,\delta_{m_x,0}\delta_{n_x,0}\,\zeta^{kt_x},
\end{equation*}
since $u\neq0$. Thus
\begin{equation*}
	\chi_{X_u(a,k)}(x) = \begin{cases} 
	p\,g_a(x)\zeta^{kt},&x=ta,\ t\in\Fp,\\
	0,&x\notin\Fp a,
	\end{cases}
\end{equation*}
while $\chi_{E_\xi}(x)=\zeta^{\xi(x)}$.  Since $g_a$ is independent of $u$, so are these character formulas.
\end{proof}

\begin{remark}\label{rem:dims}
By Proposition~\ref{prop:centerabelian}, $d(E_\xi)=1, d(X_u(a,k))=p$. Hence Proposition~\ref{prop:simples} gives
\begin{equation*}
	\operatorname{rank}(\cC_u)=p^3+p(p^3-1)=p^4+p^3-p.
\end{equation*}
\end{remark}

\begin{definition}\label{def:Lambda}
For $u,v\in\Fp^\times$ let $\Lambda_{uv}:\Irr(\cC_u)\to\Irr(\cC_v)$ be $[E_\xi]\mapsto[E_\xi]$, $[X_u(a,k)]\mapsto[X_v(a,k)]$. By Proposition~\ref{prop:simples} it is a well-defined bijection, and $\Lambda_{uv}(\one)=\one$ and it preserves fluxes and characters.
\end{definition}

\subsection{Restriction of colors}

To compare link invariants under $\Lambda_{uv}$, we restrict the half-braidings to the subgroup generated by the fluxes. The restriction construction applies to any finite abelian group.

Let $\Gamma$ be a finite abelian group, $\nu$ a normalized $3$-cocycle, and $P\le\Gamma$ a subgroup. Let $\cZ(\Vect_\Gamma^\nu)_P$ be the full subcategory of objects $(X,c_{-,X})$ whose underlying object is supported on $P$.

\begin{proposition}\label{prop:restriction}
Let $\Gamma$ be a finite abelian group, let $\nu$ be a normalized $3$-cocycle, and let $P\leq\Gamma$, with $\nu_P:=\nu|_{P^3}$. The full subcategory $\cZ(\Vect_\Gamma^\nu)_P\subset\cZ(\Vect_\Gamma^\nu)$ consisting of objects whose underlying graded vector spaces are supported on $P$ is a fusion subcategory.  Restricting the half-braiding to $\Vect_P^{\nu_P}$ defines a $\CC$-linear ribbon functor
\begin{equation*}
	\Res_P:\cZ(\Vect_\Gamma^\nu)_P\longrightarrow\cZ(\Vect_P^{\nu_P}),\qquad(X,c_{-,X})\longmapsto\left(X,c_{-,X}|_{\Vect_P^{\nu_P}}\right),
\end{equation*}
where $X$ on the right is regarded as an object of $\Vect_P^{\nu_P}$, and it acts as the identity on morphisms in $\cZ(\Vect_\Gamma^\nu)_P$. Consequently, for every framed oriented link $L$ and colors $X_1,\ldots,X_n\in\cZ(\Vect_\Gamma^\nu)_P$,
\begin{equation*}
	\RT_{\cZ(\Vect_P^{\nu_P})}
   (L;\Res_PX_1,\ldots,\Res_PX_n)=\RT_{\cZ(\Vect_\Gamma^\nu)}
   (L;X_1,\ldots,X_n).
\end{equation*}
\end{proposition}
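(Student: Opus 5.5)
The proof has three parts: (1) the subcategory is a fusion subcategory; (2) $\Res_P$ is a well-defined strict braided tensor functor that acts as the identity on underlying data; (3) $\Res_P$ preserves the ribbon twist. Once these are established, functoriality of the Reshetikhin--Turaev construction (Section~\ref{subsec:RT-conventions}) gives the equality of link evaluations.

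\textbf{Fusion subcategory.} By Definition~\ref{def:center}, the grading of $X\otimes Y$ is the sum of the gradings, and duals have negated degrees. Supports in $P$ are therefore closed under tensor products and duals, and $\one$ is supported on $P$. Each graded component $X_a$ is a center subobject by Proposition~\ref{prop:centerabelian}(i), so subobjects and direct summands of $P$-supported objects stay $P$-supported. Fullness is part of the definition.

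\textbf{The functor.} Let $(X,c_{-,X})$ be $P$-supported. Restricting the natural family $c_{U,X}$ to $U\in\Vect_P^{\nu_P}$ gives a half-braiding for $\Vect_P^{\nu_P}$. The reason is that Vect$_P^{\nu_P}$ is a full monoidal subcategory whose associator is the restriction of $\alpha$ (Section~\ref{subsec:pointed-conventions}), so \eqref{eq:halfbraiding} for $V,W\in\Vect_P^{\nu_P}$ is literally an instance of the identity in $\Vect_\Gamma^\nu$. Morphisms in $\cZ(\Vect_\Gamma^\nu)_P$ commute with all $c_{V,-}$, in particular with those for $V$ supported on $P$, so $\Res_P$ is well defined on morphisms as the identity; it is $\CC$-linear. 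The tensor product half-braiding \eqref{eq:tensorhalf} restricts termwise, again using that $\alpha$ restricts to the associator of $\nu_P$. Hence $\Res_P(X\otimes Y)=\Res_PX\otimes\Res_PY$ strictly, the unit is preserved, and the associativity constraints coincide, so we may take identity tensor structure maps. For braiding, \eqref{eq:braiding} evaluates the half-braiding of $Y$ at $X$, and $X$ lies in $\Vect_P^{\nu_P}$, so $c^{\cZ}_{X,Y}=c_{X,Y}$ is unchanged by restriction. Rigidity also matches: the standard duals in $\Vect_\Gamma^\nu$ of $P$-supported objects are the standard duals in $\Vect_P^{\nu_P}$, with the same evaluation and coevaluation maps.

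\textbf{Ribbon structure (main obstacle).} We must check $\Res_P(\theta_X)=\theta_{\Res_PX}$, where $\theta_X=\upsilon_X^{-1}\psi^{\cZ}_X$. The spherical structure $\psi^{\cZ}$ is induced from $\psi^{\Gamma,\nu}$ on underlying objects. By the uniqueness statement in Section~\ref{subsec:pointed-conventions}, $\psi^{\Gamma,\nu}_X=\psi^{P,\nu_P}_X$ for $P$-supported $X$, so the two spherical structures agree on underlying objects. The Drinfeld isomorphism $\upsilon_X$ is built from the braiding $c_{X,X^*}$ (or $c_{X^{**},X}$), together with evaluations and coevaluations, by the formulae of~\cite[\S2.8.2]{DGNO10}. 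Every ingredient in those formulae involves only $P$-supported objects ($X$, $X^*$, $X^{**}$), and we have just shown these ingredients are unchanged by $\Res_P$. Hence $\upsilon$ agrees, and so does $\theta$. As an independent check, one can use the explicit description $\theta_X|_{X_a}=\rho_X(a)$, up to the paper's conventions, with $a\in P$; this visibly depends only on the restricted action.

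Having shown $\Res_P$ is a strict ribbon functor in the sense of~\cite[\S2.4]{KT}, the link identity follows directly from the functoriality statement in Section~\ref{subsec:RT-conventions}.
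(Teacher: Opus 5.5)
Your proposal is correct and follows essentially the same route as the paper. It establishes closure of $P$-support under the fusion operations, then restricts half-braidings and associators termwise to get a strict braided tensor functor that preserves duals. It then shows $\theta=\upsilon^{-1}\psi^{\cZ}$ is preserved, via uniqueness of the positive spherical structure and the fact that $\upsilon_X$ is built only from the braiding, evaluation and coevaluation on $P$-supported objects, and concludes by Reshetikhin--Turaev functoriality.
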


\begin{proof}
The tensor unit is supported at $0\in P$, and support in $P$ is preserved by finite direct sums, direct summands, tensor products and duals. Hence $\cZ(\Vect_\Gamma^\nu)_P$ is a fusion subcategory.

If $(X,c_{-,X})$ is supported on $P$, then $X$ is canonically an object of $\Vect_P^{\nu_P}$, and restricting the family $c_{-,X}$ to objects of $\Vect_P^{\nu_P}$ preserves the half-braiding identity. Likewise, every morphism in $\cZ(\Vect_\Gamma^\nu)_P$ remains a morphism after restriction. Thus $\Res_P$ is a well-defined $\CC$-linear functor.

Since the monoidal and rigid structures on $\Vect_P^{\nu_P}$ are the restrictions of those on $\Vect_\Gamma^\nu$, $\Res_P$ is strict monoidal and preserves the dual objects, evaluation maps and coevaluation maps. Restricting half-braidings also preserves the braiding. The positive spherical structure on $\Vect_\Gamma^\nu$ restricts to that on $\Vect_P^{\nu_P}$, and hence the induced spherical structures on their centers are preserved. Since the Drinfeld isomorphism $\upsilon_X:X\to X^{**}$ is constructed functorially from the braiding, evaluation and coevaluation maps, the preceding compatibilities imply
\begin{equation*}
	\Res_P(\upsilon_X)=\upsilon_{\Res_PX}.
\end{equation*}
The induced spherical structures are also preserved, so
\begin{equation*}
	\Res_P(\psi_X)=\psi_{\Res_PX}.
\end{equation*}
Hence, using $\theta=\upsilon^{-1}\psi$,
\begin{equation*}
	\Res_P(\theta_X)=\upsilon_{\Res_PX}^{-1}\psi_{\Res_PX}=\theta_{\Res_PX}.
\end{equation*}
Thus $\Res_P$ is braided monoidal and preserves the canonical twist, so it is a ribbon functor. The asserted equality of Reshetikhin--Turaev evaluations follows from the functoriality recalled in Section~\ref{subsec:RT-conventions}.
\end{proof}

Let $u,v\in\Fp^\times$ and let $P\le V$ with $\dim P\le2$. By Lemma~\ref{lem:rank2}, the functors $\Res_P$ from $(\cC_u)_P$ and from $(\cC_v)_P$ have the same target
\begin{equation*}
\cB_P:=\cZ(\Vect_P^{\kappa|_{P^3}}),
\end{equation*}
with the same ribbon structure.

\begin{lemma}\label{lem:match}
Let $X\in\Irr(\cC_u)$ have flux in $P$. Then $\Res_PX\cong\Res_P(\Lambda_{uv}X)$ in $\cB_P$.
\end{lemma}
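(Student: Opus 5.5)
Since $\Res_P$ does not change the underlying graded vector space or the action $\rho_X(x)$ for $x\in P$, the plan is to compare $\Res_PX$ and $\Res_P(\Lambda_{uv}X)$ as objects of $\cB_P$ through Proposition~\ref{prop:centerabelian}. Both objects have the same flux $a\in P$, because $\Lambda_{uv}$ preserves fluxes. By Lemma~\ref{lem:rank2} they are also projective representations of $P$ with the same multiplier $\gamma^{\kappa}_a|_{P\times P}=(\partial g_a)|_{P\times P}$. Two such representations on the flux-$a$ component are isomorphic in $\cB_P$ exactly when they are isomorphic as $\gamma_a$-projective representations of $P$.

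Twisting by $g_a^{-1}$ turns these into honest linear representations of the abelian group $P$. These are completely reducible by Lemma~\ref{lem:maschke}, or by plain Maschke, so they are determined by their characters. It therefore suffices to show that
\begin{equation*}
\Tr\rho_X(x)=\Tr\rho_{\Lambda_{uv}X}(x)\qquad\text{for all }x\in P.
\end{equation*}
Proposition~\ref{prop:simples} supplies this: the characters $\chi_{E_\xi}$ and $\chi_{X_u(a,k)}$, evaluated at every $x\in V$ (in particular at $x\in P$), are given by formulas independent of $u$. By Definition~\ref{def:Lambda}, $\Lambda_{uv}$ sends $E_\xi\mapsto E_\xi$ and $X_u(a,k)\mapsto X_v(a,k)$, so the characters agree.

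The remaining point needing care is the character criterion for projective representations with a common multiplier. Writing $\rho=g_a\rho'$ with $\rho'$ linear on $P$, which is possible because the multiplier is the coboundary $\partial g_a$ and the sign convention $\rho(x)\rho(y)=\gamma(x,y)\rho(x+y)$ matches Lemma~\ref{lem:multiplier}, the traces differ by the nowhere-vanishing scalar $g_a(x)$. Equal characters of $\rho$ therefore give equal characters of $\rho'$, and linear independence of the irreducible characters of $P$ yields $\rho'_X\cong\rho'_{\Lambda X}$. The intertwiner is a degree-preserving linear map commuting with $\rho(x)$ for all $x\in P$. By~\eqref{eq:halfexplicit} it is then compatible with the restricted half-braidings, hence a morphism in $\cB_P$.
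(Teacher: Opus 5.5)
Your proposal is correct and follows essentially the same route as the paper. In both, the two objects have the same flux and the same multiplier $\partial g_a|_{P\times P}$ by Lemma~\ref{lem:rank2}; twisting by $g_a^{-1}$ gives ordinary representations of $P$; and the $u$-independent characters of Proposition~\ref{prop:simples} plus the character criterion yield the isomorphism. The only difference is that the paper treats the electric case $E_\xi$ separately, where the restrictions are literally identical, while your uniform argument covers it with $g_0\equiv1$.
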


\begin{proof}
For $X=E_\xi$, the two restrictions are identical, namely both are $\CC$ in degree zero with action $\rho(x)=\zeta^{\xi(x)}, x\in P$.

Now let $X=X_u(a,k)$ with $a\in P\setminus\{0\}$, and put $X':=X_v(a,k)$. By Lemma~\ref{lem:rank2}, the restricted projective representations $\rho|_P$ and $\rho'|_P$ have the same multiplier $\partial g_a|_{P\times P}$. Hence
\begin{equation*}
	\sigma(x):=g_a(x)^{-1}\rho(x),\qquad\sigma'(x):=g_a(x)^{-1}\rho'(x)
\end{equation*}
are ordinary representations of $P$. By Proposition~\ref{prop:simples}, the characters of $X_u(a,k)$ and $X_v(a,k)$ coincide. Since $\sigma=g_a^{-1}\rho$ and $\sigma'=g_a^{-1}\rho'$, their ordinary characters also coincide:
\begin{equation*}
	\Tr\sigma(x)=g_a(x)^{-1}\chi_X(x)=g_a(x)^{-1}\chi_{X'}(x)=\Tr\sigma'(x).
\end{equation*}
Therefore $\sigma\cong\sigma'$ by the usual character criterion for finite-group representations. Multiplying back by $g_a$ gives $\rho|_P\cong\rho'|_P$. By Proposition~\ref{prop:centerabelian}, this is precisely an isomorphism
\begin{equation*}
	\Res_PX\cong\Res_PX'
\end{equation*}
in $\cB_P$.
\end{proof}

For a link with at most two components, the fluxes of its colors span a subspace $P\le V$ of dimension at most two. Hence Lemma~\ref{lem:match} compares all of its colors inside the common ribbon category $\cB_P$.

\begin{theorem}[Equality of link invariants]\label{thm:step1}
Let $u,v\in\Fp^\times$. For every framed oriented link $L$ with $n\in\{1,2\}$ components and all $X_1,\dots,X_n\in\Irr(\cC_u)$,
\begin{equation*}
\RT_{\cC_u}(L;X_1,\dots,X_n)=\RT_{\cC_v}(L;\Lambda_{uv}X_1,\dots,\Lambda_{uv}X_n).
\end{equation*}
Moreover $\theta_{\Lambda_{uv}X}=\theta_X$ and $d(\Lambda_{uv}X)=d(X)$ for every simple $X$.
\end{theorem}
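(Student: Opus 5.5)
The plan is to reduce everything to the common restricted category $\cB_P$ via Proposition~\ref{prop:restriction} and Lemma~\ref{lem:match}. Fix $L$ with $n\in\{1,2\}$ components and simple colors $X_1,\dots,X_n\in\Irr(\cC_u)$, with fluxes $a_1,\dots,a_n\in V$. Let $P\le V$ be the subgroup they span. Since $n\le2$, we have $\dim_{\Fp}P\le2$. By Definition~\ref{def:Lambda}, $\Lambda_{uv}$ preserves fluxes, so the colors $\Lambda_{uv}X_i$ also have fluxes $a_i\in P$. Thus all $X_i$ lie in $(\cC_u)_P$ and all $\Lambda_{uv}X_i$ lie in $(\cC_v)_P$.

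Apply Proposition~\ref{prop:restriction} twice, with $\Gamma=V$, $\nu=\omega_u$ and $\nu=\omega_v$. By Lemma~\ref{lem:rank2}, both restriction functors land in the same ribbon category $\cB_P=\cZ(\Vect_P^{\kappa|_{P^3}})$. Here it matters that this is a literal equality of categories, including the positive spherical structure and hence the twist, so that both evaluations are computed in one ribbon category. This gives
\begin{equation*}
\RT_{\cC_u}(L;X_1,\dots,X_n)=\RT_{\cB_P}(L;\Res_PX_1,\dots,\Res_PX_n),
\end{equation*}
and likewise
\begin{equation*}
\RT_{\cC_v}(L;\Lambda_{uv}X_1,\dots,\Lambda_{uv}X_n)=\RT_{\cB_P}(L;\Res_P\Lambda_{uv}X_1,\dots,\Res_P\Lambda_{uv}X_n).
\end{equation*}
Lemma~\ref{lem:match} gives $\Res_PX_i\cong\Res_P\Lambda_{uv}X_i$ in $\cB_P$ for each $i$. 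Invariance of RT evaluations under replacing colors by isomorphic objects (Section~\ref{subsec:RT-conventions}) then equates the two right-hand sides, which proves the displayed identity.

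For the supplementary claims, specialize to $n=1$. Taking $L=O^0$ gives $d(\Lambda_{uv}X)=d(X)$, using $d(X)=\RT(O^0;X)$ as in Section~\ref{subsec:STW-conventions}. Taking $L=O^{+1}$ and dividing by this common nonzero dimension gives $\theta_{\Lambda_{uv}X}=\theta_X$. As a consistency check, $d(X)$ equals $1$ or $p$ by Remark~\ref{rem:dims}. Alternatively, one can argue directly: since $\Res_P$ is ribbon, $\theta_X$ (a scalar on a simple object) equals the twist of $\Res_PX$ in $\cB_P$, which depends only on the isomorphism class of $\Res_PX$.

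The main obstacle is the subtle point hidden in Proposition~\ref{prop:restriction}: $\Res_PX$ need not be simple in $\cB_P$. Indeed $X_u(a,k)$ restricts to a $p$-dimensional, generally reducible object. This causes no problem, because RT evaluation is defined for arbitrary colors and Lemma~\ref{lem:match} is stated as an isomorphism of possibly nonsimple objects. I would therefore be careful to invoke color-invariance under isomorphism rather than any statement about simples.

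The degenerate cases need no separate argument. When all fluxes are zero, $P=0$; when $n=1$, or when $a_2\in\Fp a_1$, $P$ is a line. In each of these $P$ still has dimension at most $2$, so the same reduction applies verbatim.
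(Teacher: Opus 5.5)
Your proposal is correct and follows essentially the same route as the paper. You restrict to $P=\spanop(a_1,\dots,a_n)$, use Lemma~\ref{lem:rank2} and Proposition~\ref{prop:restriction} to compute both sides in the common ribbon category $\cB_P$, and conclude via Lemma~\ref{lem:match} and invariance of RT evaluations under isomorphic colors. The only cosmetic difference is in the twist equality $\theta_{\Lambda_{uv}X}=\theta_X$: your primary argument uses $O^{+1}$ and $d(X)\neq0$, whereas the paper uses naturality of the twist on $\Res_PX\cong\Res_P\Lambda_{uv}X$ with $P=\Fp a$, which you also give as an alternative.
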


\begin{proof}
Let $a_j$ be the flux of $X_j$ and $P:=\spanop(a_1,\dots,a_n)$, so $\dim P\le n\le2$. The objects $X_j$ lie in $(\cC_u)_P$, and $\Lambda_{uv}X_j$ with the same flux lie in $(\cC_v)_P$. By Proposition~\ref{prop:restriction}, Lemma~\ref{lem:match} and Section~\ref{subsec:RT-conventions} restricted to $\cB_P$),
\begin{align*}
\RT_{\cC_u}(L;X_1,\dots,X_n)&=\RT_{\cB_P}(L;\Res_PX_1,\dots,\Res_PX_n)\\
&=\RT_{\cB_P}(L;\Res_P\Lambda_{uv}X_1,\dots,\Res_P\Lambda_{uv}X_n)=\RT_{\cC_v}(L;\Lambda_{uv}X_1,\dots,\Lambda_{uv}X_n).
\end{align*}
To compare twists, take $P=\Fp a$, where $a$ is the flux of $X$. By Proposition~\ref{prop:restriction}, $\theta_X$ and $\theta_{\Lambda_{uv}X}$ are the twists of the isomorphic objects $\Res_PX\cong\Res_P\Lambda_{uv}X$ of $\cB_P$, and the twist is natural, so they are equal. The equality $d(\Lambda_{uv}X)=d(X)$ comes from the case $L=O^0$.
\end{proof}

\begin{corollary}\label{cor:STW}
Under the single bijection $\Lambda_{uv}$, the matrices $\widetilde S$, $S$, $T$, $\widetilde W$ and $W$ of $\cC_u$ and $\cC_v$ coincide. This proves part~\textup{(A)} of the Main Theorem.
\end{corollary}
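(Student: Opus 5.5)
The corollary follows from Theorem~\ref{thm:step1} together with the normalization conventions of Section~\ref{subsec:STW-conventions}. Every entry of the five matrices is built from one- or two-component link evaluations colored by simple objects. Theorem~\ref{thm:step1} says all such evaluations agree under the single vacuum-preserving bijection $\Lambda_{uv}$. So the plan is to check, matrix by matrix, that each entry is a fixed expression in these evaluations, with no extra choices depending on $u$.

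First, the scalar quantities.
\begin{itemize}
\item Dimensions: $d(X)=\RT(O^0;X)$, so $d(\Lambda_{uv}X)=d(X)$, which Theorem~\ref{thm:step1} already records.
\item Twists: $\theta_X=\RT(O^{+1};X)/d(X)$. This is well defined because $d(X)\in\{1,p\}$ is nonzero by Remark~\ref{rem:dims}, and it also agrees; this too is stated in Theorem~\ref{thm:step1}.
\item Global dimension: $D=(\sum_X d(X)^2)^{1/2}$ is a sum over all of $\Irr$. Since $\Lambda_{uv}$ is a bijection preserving $d$, the two sums are equal. One can also check this directly: both sides equal $(p^3+p^2(p^3-1))^{1/2}=p^3$.
\end{itemize}

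Next, the matrices.
\begin{itemize}
\item $\widetilde S_{XY}=\RT(\mathrm{Hopf};X,Y)$ is a two-component evaluation, so $\widetilde S_{\Lambda_{uv}X,\Lambda_{uv}Y}=\widetilde S_{XY}$. Dividing by the common $D$ gives the same for $S$.
\item $T=\operatorname{diag}(\theta_X)$ agrees because the twists agree.
\item $\widetilde W_{XY}$ is the evaluation of the fixed framed oriented Whitehead diagram of \cite[\S2.1.2, Definition~2.1]{BDGRTW}. That diagram is a two-component framed oriented link, so Theorem~\ref{thm:step1} applies directly.
\item $W_{XY}=(\theta_X/\theta_Y)\widetilde W_{XY}$ then agrees as well.
\end{itemize}

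For the Main Theorem~(A) itself, Theorem~\ref{thm:step1} supplies the displayed identity for $n\in\{1,2\}$ and simple colors, and Definition~\ref{def:Lambda} supplies $\Lambda_{uv}(\one)=\one$.

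There is essentially no obstacle here. The only point that needs checking is the bookkeeping for $\widetilde W$: the Whitehead link in \cite{BDGRTW} must be a genuine two-component framed oriented link, evaluated by the same RT convention with the same orientations and framings in both categories. Its normalization must also involve only $\theta$ and $d$, not any $u$-dependent choice such as a square root of $D$ beyond the positive one. Since that link has exactly two components, Theorem~\ref{thm:step1} covers it.
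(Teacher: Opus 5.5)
Your argument is the same as the paper's, which proves the corollary in one line by combining Theorem~\ref{thm:step1} with the normalizations of Section~\ref{subsec:STW-conventions}, and your matrix-by-matrix expansion of that step is correct. One slip in your optional direct check of $D$: there are $p(p^3-1)$ simples of dimension $p$, so $D^2=p^3+p^2\cdot p(p^3-1)=p^6$, whereas the expression $p^3+p^2(p^3-1)$ you wrote does not equal $p^6$ (your value $D=p^3$ is still right, and your main argument via the dimension-preserving bijection does not depend on this check).
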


\begin{proof}
This follows from Theorem~\ref{thm:step1} and Section~\ref{subsec:STW-conventions}.
\end{proof}

\begin{remark}\label{rem:uzero}
Although Lemma~\ref{lem:rank2} also holds for $u=0$, the comparison with $\cC_u$, $u\neq0$, cannot hold under a bijection of simple objects. Indeed, for $u=0$ every multiplier $\gamma_a^{\omega_0}=\partial g_a$ is a coboundary, so each flux has $p^3$ simple objects and $\operatorname{rank}(\cC_0)=p^6$. For $u\neq0$, Remark~\ref{rem:dims} gives $\operatorname{rank}(\cC_u)=p^4+p^3-p$. Thus the assumption $u,v\neq0$ is essential for the common $\Lambda_{uv}$ used in Theorem~\ref{thm:step1}.
\end{remark}

%======================================================================
\section{The obstruction to braided equivalence}\label{sec:step2}
%======================================================================

Throughout this section $u,v\in\Fp^\times$, and $\Phi:\cC_u\to\cC_v$ denotes a braided tensor equivalence with tensor structure $J_{X,Y}:\Phi(X)\otimes\Phi(Y)\to\Phi(X\otimes Y)$ where no compatibility with ribbon twists $\theta_X$ is assumed.

\subsection{The electric subcategory and its regular algebra}

The invertible objects identify the electric subcategory intrinsically. This forces a braided equivalence to preserve the regular algebra that will be used to recover the pointed category.

\begin{lemma}\label{lem:invertible}
A simple object of $\cC_u$ is invertible if and only if it is isomorphic to some $E_\xi$, and $E_\xi\otimes E_{\xi'}\cong E_{\xi+\xi'}$.

Let $\cE_u\subseteq\cC_u$ be the full subcategory of objects supported in degree $0$. Its objects are exactly the direct sums of invertible simple objects of $\cC_u$. It is a fusion subcategory, and $(X,\rho_X)\mapsto(X,\rho_X)$ is an isomorphism of braided monoidal categories $\cE_u\to\Rep(V)$, where $\Rep(V)$ has the trivial associativity constraint and the flip braiding.
\end{lemma}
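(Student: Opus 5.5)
The plan is to read everything off Proposition~\ref{prop:centerabelian} and Proposition~\ref{prop:simples}. \emph{Invertibility.} By Proposition~\ref{prop:centerabelian}(i), a simple object is invertible if and only if its underlying vector space is one-dimensional, since $d(X)=\dim_\CC X$ is positive and an object is invertible exactly when it is simple of dimension $1$. By Proposition~\ref{prop:simples}, every simple object is isomorphic to some $E_\xi$, of dimension $1$, or to some $X_u(a,k)$, of dimension $p>1$. Hence the invertible simples are exactly the $E_\xi$. For the fusion rule, note that $E_\xi\otimes E_{\xi'}$ is supported in degree $0+0=0$. By Proposition~\ref{prop:centerabelian}(iii), its action is $\rho(x)=\zeta^{\xi(x)}\zeta^{\xi'(x)}=\zeta^{(\xi+\xi')(x)}$. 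The associator is trivial there because $\omega_u$ is normalized, so $\omega_u(x,0,0)=1$, and the tensor product half-braiding~\eqref{eq:tensorhalf} reduces to the tensor product of actions. Thus $E_\xi\otimes E_{\xi'}\cong E_{\xi+\xi'}$.

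\emph{The subcategory $\cE_u$.} Each graded piece $X_a$ is a center subobject, so an object of $\cC_u$ decomposes into simples with the same fluxes. Therefore objects supported in degree $0$ are exactly direct sums of simples of flux $0$. By Proposition~\ref{prop:simples} these simples are exactly the $E_\xi$, i.e.\ the invertible ones. Closure under tensor products, duals, direct sums and summands, and containment of $\one=E_0$, are immediate from the degree bookkeeping, as in the proof of Proposition~\ref{prop:restriction} with $P=0$. Hence $\cE_u$ is a fusion subcategory.

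\emph{The isomorphism $\cE_u\to\Rep(V)$.} Since $\gamma^{\omega_u}_0\equiv1$, objects of $\cE_u$ are honest representations of $V$, and morphisms are intertwiners. So the identity assignment is an isomorphism of categories. This is exactly Proposition~\ref{prop:centerabelian}(iii), which also supplies the tensor action, the identity associator and the flip braiding. The one point I would verify carefully, and expect to be the only mild obstacle, is the braiding. For $v\in X_0$ and $w\in Y$, Proposition~\ref{prop:centerabelian}(ii) gives $c^{\cZ}(v\otimes w)=\rho_Y(0)w\otimes v=w\otimes v$, which is the flip. I would also check that the unit constraints and the values $\omega_u(0,0,0)=1$ make the functor strict monoidal with identity tensor structure. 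With these checks, the functor is an isomorphism of braided monoidal categories.
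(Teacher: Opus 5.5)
Your proposal is correct and follows essentially the same route as the paper. Invertibility comes from the dimension criterion in Proposition~\ref{prop:centerabelian}(i) together with the dimensions in Proposition~\ref{prop:simples}, and the fusion rule comes from the tensor action in Proposition~\ref{prop:centerabelian}(iii). The only differences are that the paper cites \cite{NN} for the isomorphism $\cE_u\to\Rep(V)$, whereas you check the identity associator and the flip braiding directly from normalization and Proposition~\ref{prop:centerabelian}(ii), and that you spell out the fusion-subcategory and direct-sum claims, which the paper leaves implicit.
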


\begin{proof}
By Proposition~\ref{prop:centerabelian}, a simple object is invertible exactly when its underlying space is one-dimensional. Proposition~\ref{prop:simples} gives dimension $1$ for the $E_\xi$ and dimension $p>1$ for the $X_u(a,k)$. Proposition~\ref{prop:centerabelian} gives $\rho_{E_\xi\otimes E_{\xi'}}(x)=\zeta^{\xi(x)+\xi'(x)}$, which proves the tensor product formula. That $\cE_u\to\Rep(V)$ is an isomorphism follows from~\cite[\S1.1, immediately after Theorem~1.2]{NN}.
\end{proof}

It follows that the electric subcategory is preserved by $\Phi$.

\begin{lemma}\label{lem:Epreserved}
$\Phi(\cE_u)\subseteq\cE_v$, and $\Phi$ restricts to a braided tensor equivalence $\cE_u\to\cE_v$.
\end{lemma}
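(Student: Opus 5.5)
The plan is to use that a braided tensor equivalence sends invertible objects to invertible objects, and that in a fusion category the invertible simples generate a fusion subcategory. First, for each simple $E_\xi\in\cE_u$: it is invertible, with $E_\xi\otimes E_{-\xi}\cong\one$ by Lemma~\ref{lem:invertible}. Since $\Phi$ is a tensor functor, $\Phi(E_\xi)\otimes\Phi(E_{-\xi})\cong\Phi(E_\xi\otimes E_{-\xi})\cong\Phi(\one)\cong\one$ via $J$ and the unit isomorphism, and symmetrically in the other order. So $\Phi(E_\xi)$ is invertible in $\cC_v$. An invertible object is simple, because $\End(\Phi(E_\xi))\cong\End(\Phi(E_\xi)\otimes\Phi(E_{-\xi}))\cong\End(\one)=\CC$. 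Lemma~\ref{lem:invertible} applied to $\cC_v$ then shows $\Phi(E_\xi)\cong E_{\xi'}$ for some $\xi'$, hence $\Phi(E_\xi)$ is supported in degree $0$ and lies in $\cE_v$.

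Next, every object of $\cE_u$ is a finite direct sum of $E_\xi$'s (Lemma~\ref{lem:invertible}). $\Phi$ is additive and $\cE_v$ is closed under direct sums and isomorphisms, being the full subcategory of degree-$0$-supported objects. Hence $\Phi(\cE_u)\subseteq\cE_v$.

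For the equivalence statement, the restriction $\Phi|_{\cE_u}:\cE_u\to\cE_v$ is fully faithful, since $\Phi$ is and both are full subcategories. It inherits the tensor structure $J$ and the braided compatibility, since $\cE_u,\cE_v$ are full fusion subcategories and the braiding is restricted. For essential surjectivity I would apply the same argument to a quasi-inverse $\Psi:\cC_v\to\cC_u$, which is again a braided tensor equivalence; this gives $\Psi(\cE_v)\subseteq\cE_u$. Then every $Y\in\cE_v$ satisfies $Y\cong\Phi(\Psi Y)$ with $\Psi Y\in\cE_u$. Alternatively, $\Phi$ induces an injection of the $p^3$ invertible simple classes of $\cC_u$ into those of $\cC_v$, which are also $p^3$ in number, so it is a bijection.

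The only mildly delicate point is that invertibility and simplicity are preserved. This is standard but must use the monoidal structure $J$ rather than just the underlying functor. Everything else is bookkeeping with Lemma~\ref{lem:invertible}.
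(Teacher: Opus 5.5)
Your proposal is correct and follows essentially the same route as the paper's proof. You use $J$ to show $\Phi$ preserves invertible objects, identify their images with some $E_{\xi'}$ via Lemma~\ref{lem:invertible}, extend by additivity, and get essential surjectivity by running the same argument for a quasi-inverse $\Psi$. The paper additionally checks explicitly that $\Psi$ is braided, which you take as standard; this step only needs $\Psi$ to be monoidal in any case.
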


\begin{proof}
An equivalence sends simple objects to simple objects, and the tensor structure sends invertible objects to invertible objects because $\Phi(X)\otimes\Phi(Y)\cong\Phi(X\otimes Y)\cong\Phi(\one)\cong\one$. Lemma~\ref{lem:invertible} therefore implies that $\Phi(E_\xi)$ is isomorphic to some $E_{\xi'}$. Since $\cE_u$ consists of direct sums of the $E_\xi$, and $\cE_v$ is closed under direct sums and isomorphisms, it follows that $\Phi(\cE_u)\subseteq\cE_v$. 

Let $\Psi:\cC_v\to\cC_u$ be a monoidal tensor quasi-inverse of $\Phi$~\cite[Remark~2.4.10]{EGNO} with monoidal isomorphisms $\varepsilon:\Phi\Psi\Rightarrow\id$ and $\Psi\Phi\cong\id$, and we intend to show that it is also braided. For $X,Y\in\mathcal C_v$ put $f=\Psi(c_{X,Y})\,J^\Psi_{X,Y}$ and $g=J^\Psi_{Y,X}\,c_{\Psi X,\Psi Y}$. Monoidality of $\varepsilon$ means $\Phi(J^\Psi_{X,Y})J^\Phi_{\Psi X,\Psi Y}=\varepsilon_{X\otimes Y}^{-1}(\varepsilon_X\otimes\varepsilon_Y)$. Using this together with naturality of $\varepsilon$ and of $c$, and that $\Phi$ is braided: $\Phi(f)\,J^\Phi=\varepsilon^{-1}_{Y\otimes X}\,c_{X,Y}\,(\varepsilon_X\otimes\varepsilon_Y)=\Phi(g)\,J^\Phi$. Since $J^\Phi$ is invertible and $\Phi$ is faithful, $f=g$. So $\Psi$ is braided.

The same argument gives $\Psi(\cE_v)\subseteq\cE_u$. Hence, for every $Y\in\cE_v$, $Y\cong\Phi\Psi(Y)$ with $\Psi(Y)\in\cE_u$, so $\Phi|_{\cE_u}:\cE_u\to\cE_v$ is essentially surjective. It is fully faithful because $\Phi$ is fully faithful and $\cE_u,\cE_v$ are full subcategories of $\cC_u,\cC_v$, respectively.  Thus $\Phi|_{\cE_u}$ is a braided tensor equivalence.
\end{proof}

For a finite group $G$, the regular algebra in $\Rep(G)$ is $\Fun(G)$ with pointwise multiplication and the action of $G$ by left translations. It is commutative and separable, with a bimodule splitting of multiplication given in \cite[Example~2.8]{DMNO}. An algebra is called \'etale if it is commutative and separable, and connected if $\dim\Hom(\one,A)=1$ \cite[Definition~3.1]{DMNO}.

Let $A_u\in\cE_u$ be the algebra corresponding to $\Fun(V)$ under the isomorphism of Lemma~\ref{lem:invertible}.

\begin{lemma}\label{lem:regular}
$A_u$ is a connected \'etale algebra in $\cC_u$, and $A_u\cong\bigoplus_{\xi\in V^\vee}E_\xi$ as an object. Moreover, every connected \'etale algebra $B$ in $\cE_v$ whose underlying vector space has dimension $|V|$ is isomorphic to $A_v$ as an algebra. Consequently, $\Phi(A_u)\cong A_v$ as algebras in $\cC_v$.
\end{lemma}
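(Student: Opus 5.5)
The plan is to work inside $\Rep(V)$ via the braided isomorphism $\cE_u\cong\Rep(V)$ of Lemma~\ref{lem:invertible}. Once this is done, every statement is about algebras in $\Rep(V)$. Braided tensor functors between the fusion subcategories $\cE_u\subseteq\cC_u$ preserve commutativity, separability and connectedness, because these notions are defined by diagrams in the ambient category. Hence étale algebras in $\cE_u$ are the same as étale algebras in $\cC_u$ supported in $\cE_u$.

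\textbf{Step 1 (properties of $A_u$).} I would take $\Fun(V)$ with pointwise multiplication and the left translation action. It is commutative in $\Rep(V)$ because the braiding there is the flip and pointwise multiplication is commutative. It is separable via the splitting of \cite[Example~2.8]{DMNO}. It is connected because the $V$-invariants of $\Fun(V)$ are the constant functions, a one-dimensional space. As a representation it is the regular representation of the abelian group $V$, which decomposes as $\bigoplus_{\xi\in V^\vee}\zeta^{\xi}$. Transported to $\cE_u$, this gives $A_u\cong\bigoplus_\xi E_\xi$. The same properties then hold in $\cC_u$, since $\cE_u$ is a full fusion subcategory with the restricted braiding.

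\textbf{Step 2 (uniqueness, the main step).} Let $B$ be a connected étale algebra in $\Rep(V)$ with $\dim B=|V|$. The underlying vector space of $B$, obtained by forgetting to $\Vect$, is a commutative separable $\CC$-algebra, hence isomorphic to $\CC^n$ with $n=|V|$. The group $V$ acts on it by algebra automorphisms, and therefore permutes the $n$ primitive idempotents. Connectedness says $B^V=\CC$, so the permutation action is transitive. An orbit of size $n=|V|$ forces the stabilizer to be trivial, so the action is simply transitive. Choosing a base idempotent $e_0$ gives a $V$-set isomorphism from $V$ to the set of idempotents, $x\mapsto x\cdot e_0$. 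This yields an equivariant algebra isomorphism $\Fun(V)\to B$, sending $\delta_x$ to $x\cdot e_0$. This is the standard fact that connected étale algebras in $\Rep(G)$ are $\Fun(G/H)$ \cite{DMNO}, and I expect it to be the only step needing genuine care. One subtle point is that separability in $\Rep(V)$ implies separability as a plain algebra, since the forgetful functor to $\Vect$ is a tensor functor. Transporting back through Lemma~\ref{lem:invertible} gives $B\cong A_v$.

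\textbf{Step 3 (transport under $\Phi$).} By Lemma~\ref{lem:Epreserved}, $\Phi$ restricts to a braided tensor equivalence $\cE_u\to\cE_v$. Hence $\Phi(A_u)$, with multiplication $\Phi(m)\circ J$ and unit $\Phi(\eta)$ composed with the unit isomorphism, is a connected étale algebra in $\cE_v$. Commutativity uses that $\Phi$ is braided. Separability transports the bimodule splitting, using $J$. Connectedness follows from $\Hom(\one,\Phi A_u)\cong\Hom(\one,A_u)$. For the dimension, $\Phi$ sends each $E_\xi$ to some $E_{\xi'}$, and does so bijectively on isomorphism classes because $\Phi$ is an equivalence. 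Therefore $\Phi(A_u)\cong\bigoplus_\xi E_{\xi'}$ has dimension $|V|$. Step 2 then gives $\Phi(A_u)\cong A_v$ as algebras.
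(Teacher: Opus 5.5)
Your proof is correct and follows the paper's three-step structure: properties of the regular algebra, uniqueness of the $|V|$-dimensional connected \'etale algebra in $\Rep(V)$, and transport along $\Phi$. The differences are in detail only. Where the paper cites Davydov's classification of connected \'etale algebras in $\Rep(G)$ as $\Fun(G/H)$, you prove the special case you need directly, by letting $V$ permute the primitive idempotents. You also justify $\dim\Phi(A_u)=|V|$ explicitly via $\Phi(E_\xi)\cong E_{\xi'}$, which the paper asserts without comment.
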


\begin{proof}
Since $\mathcal E_u\subset\mathcal C_u$ is Tannakian, its regular algebra $A_u$ is connected \'etale by~\cite[Example~3.3(i)]{DMNO}. The decomposition $A_u\cong\bigoplus_{\xi\in V^\vee}E_\xi$ follows from the regular-representation decomposition~\cite[Theorem~4.1.1(ii)]{EGHLLSVY}, since $V$ is abelian.

Under the braided equivalence $\cE_v\simeq\Rep(V)$, connected \'etale algebras correspond to function algebras on transitive finite $V$-sets~\cite[Lemma~3.3.1]{Davydov}. Thus $B\cong\Fun(V/H)$ for some subgroup $H\leq V$. Since $\dim_{\CC}B=[V:H]=|V|$,  we have $H=\{0\}$, and hence $B\cong\Fun(V)=A_v$ as algebras. 

Since $\Phi$ is a braided tensor equivalence, $\Phi(A_u)$ is a connected \'etale algebra in $\cE_v$ of dimension $|V|$. By the preceding classification of connected \'etale algebras in $\cE_v\simeq\Rep(V)$, it follows that $\Phi(A_u)\cong A_v$ as algebras.
\end{proof}

\subsection{Recovering the pointed category}

For a connected \'etale algebra $A$ in a braided fusion category $\cC$, write $\cC_A$ for the category of right $A$-modules $(M,\mu)$, where $\mu:M\otimes A\longrightarrow M$ is the right action, equipped with the tensor structure of~\cite[\S3.3]{DMNO}. Namely, a right $A$-module $M$ is regarded as an $A$-bimodule $M_-$ using the left action $A\otimes M\xrightarrow{c^{-1}_{M,A}}M\otimes A\longrightarrow M$~\cite[\S3.3]{DMNO}, and the tensor product in $\cC_A$ is induced by the relative tensor product of these bimodules.

\begin{proposition}\label{prop:reconstruction}
For $u\in\Fp^\times$, $(\cC_u)_{A_u}$ is monoidally equivalent to $\Vect_V^{\omega_u}$.
\end{proposition}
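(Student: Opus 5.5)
The plan is to invoke the general theorem that for a fusion category $\cD$ and the regular algebra of the Tannakian subcategory $\Rep(\Gamma)\subset\cZ(\cD)$ arising from a $\Gamma$-grading, the de-equivariantization recovers the trivial-degree part. For pointed $\cD=\Vect_V^{\omega_u}$ this part is all of $\cD$. Concretely, I would use the standard equivalence $\cZ(\cC)_A\simeq\cC$ for $A=I(\one)$ the image of the unit under the adjoint $I$ of the forgetful functor (\cite[\S3]{DMNO}, \cite{DGNO10}). The first step is to identify $A_u$ with $I(\one)$ as algebras. Since $\Forg\circ I(\one)=\bigoplus_{x\in V}\delta_x\otimes\one\otimes\delta_x^*\cong\bigoplus_x\one$ is supported in degree $0$, $I(\one)$ lies in $\cE_u$. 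It is connected étale of dimension $|V|$, hence isomorphic to $A_u$ by Lemma~\ref{lem:regular}.

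I would nonetheless prefer a direct construction, to keep conventions under control. Define a functor $F:(\cC_u)_{A_u}\to\Vect_V^{\omega_u}$ by $F(M,\mu)=\Forg(M)\otimes_{\Forg(A_u)}\CC_\epsilon$, where $\epsilon:\Fun(V)\to\CC$ is evaluation at $0$. Equivalently, $F(M)=M\,e_0$ is the image of the idempotent $e_0=\delta_0\in\Fun(V)$ acting on $M$, which is graded since $A_u$ sits in degree $0$.

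In the inverse direction, send $X\in\Vect_V^{\omega_u}$ to the free module $I(X)=\bigoplus_{x\in V}X\otimes\CC$ with $V$ acting projectively. Explicitly, $I(\delta_a)=\bigoplus_{x}\delta_a$ carries the $\gamma_a$-projective action induced from the trivial subgroup, and $A_u$ acts by pointwise multiplication on the $x$-index. One checks that $F$ and this induction are mutually inverse: a right $A_u$-module structure on a $\gamma$-projective $V$-module is exactly a $V$-equivariant decomposition into pieces indexed by $V$ (imprimitivity), and $V$ acts simply transitively on these pieces. This gives an equivalence of abelian categories with simple objects matching $\delta_a$.

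Monoidality is the main obstacle. The tensor product in $(\cC_u)_{A_u}$ is $M\otimes_{A_u}N$, with the left action on $M$ twisted by $c^{-1}_{M,A}$. I would compute this as follows. Since $A_u$ is electric, the half-braiding of $A_u$ against $M$ reduces via \eqref{eq:halfexplicit} to $\rho_{A}(a)$, translation by the flux. The inverse braiding $c^{-1}_{M,A}$ uses $\rho_M$ on degree-$0$ elements, which act trivially up to the normalized cocycle. Consequently $(M\otimes_A N)e_0\cong Me_0\otimes Ne_0$ as graded spaces. The remaining task is to track the associator of $\cC_u$, which is $\omega_u$ on underlying degrees, through this identification and show that the induced associator on $F(M)\otimes F(N)\otimes F(K)$ is precisely $\omega_u$. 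Since the identification is via subspaces and $\alpha$ acts by $\omega_u$ on degree components, no coboundary correction should arise. Where the compatibility of unit constraints or the $c^{-1}$ twist introduces scalars, I would absorb them into a tensor structure $J$ on $F$, which can alter $\omega_u$ only by a coboundary; that is harmless for a monoidal equivalence.

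As a fallback, I would simply cite \cite[Proposition~4.56(ii) / Corollary~4.31]{DGNO10}-type results, namely $\cZ(\cD)_{I(\one)}\simeq\cD$ as tensor categories, combined with the identification $A_u\cong I(\one)$ above.
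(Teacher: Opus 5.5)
Your citation route is the paper's proof. You identify $A_u$ with $I(\one)$ and then apply $\cZ(\cD)_{I(\one)}\simeq\cD$; the paper cites \cite[Example~8.8.9(ii)]{EGNO} for the first step and \cite[Lemma~8.12.2(ii)]{EGNO} for the second. Your own argument for the first step is a valid replacement for the citation: $\Forg I(\one)\cong\bigoplus_{x\in V}\one$ sits in degree $0$, $I(\one)$ is connected \'etale of dimension $|V|$, and Lemma~\ref{lem:regular} gives uniqueness. Your opening heuristic is misstated, though. The trivial component of the $V$-grading of $\Vect_V^{\omega_u}$ is $\Vect$, not all of $\cD$. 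De-equivariantizing all of $\cZ(\cD)$ yields the relative center $\cZ_{\cD_0}(\cD)$, which equals $\cD$ because $\cD_0=\Vect$.

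The direct construction you say you prefer has a genuine error, and it is exactly the convention issue you were worried about. In the conventions of Definition~\ref{def:center}, $c_{M,A}$ is the half-braiding of $A$ evaluated at $M$. So for $m$ of flux $a$ one has $c^{-1}_{M,A}(f\otimes m)=m\otimes\rho_A(-a)f$. With $\rho_A(x)e_y=e_{y+x}$, the left action is therefore $e_y\cdot m=m\cdot e_{y-a}$, a flux-dependent shift. The trivial action you describe (degree-$0$ elements acting through $\rho_M(0)=\id$) is the one coming from $c_{A,M}$.

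\begin{itemize}
\item With the $c_{A,M}$ convention your sketch is literally right: $(M\otimes_AN)e_0=Me_0\otimes Ne_0$, $J=\id$, and the associator is $\omega_u$.
\item With $c^{-1}_{M,A}$, instead, $(M\otimes_AN)e_0=\bigoplus_b Me_b\otimes(N_b)e_0$. The natural identification is $J_{M,N}(m\otimes n)=\rho_M(b)m\otimes n$ for $n\in N_b$. Combine $\rho_M(c)\rho_M(b)=\gamma^{\omega_u}_a(c,b)\rho_M(b+c)$ with the scalar in \eqref{eq:tensorhalf}. The monoidal-functor axiom then gives the transported associator $\omega_u(c,b,a)^{-1}$, that is, $(\Vect_V^{\omega_u})^{\otimes\mathrm{op}}$.
\end{itemize}

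This opposite category has $q=\zeta^{-Q}$ and $\Alt=\zeta^{u\det}$. By Lemma~\ref{lem:invariants} it is not monoidally equivalent to $\Vect_V^{\omega_u}$: one would need $M^{\mathsf T}M=-I$ together with $\det M=1$, but the first forces $(\det M)^2=-1$.

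So the step ``absorb the scalars into $J$'' is not available. A tensor structure changes the associator only by a coboundary, and whether the discrepancy is a coboundary is exactly what must be proved. Here it is not.

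With these center conventions, the $c_{A,M}$ convention gives $\cD$ and the $c^{-1}_{M,A}$ convention gives $\cD^{\otimes\mathrm{op}}$, consistent with $\cZ(\cD)^{\mathrm{rev}}\simeq\cZ(\cD^{\otimes\mathrm{op}})$. The same caveat applies to the paper's text, which fixes $c^{-1}_{M,A}$. This is harmless for Corollary~\ref{cor:pointedequiv}, since the opposite appears uniformly in $u$.
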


\begin{proof}
Let $I_u$ be the right adjoint of  $\Forg:\cC_u\to\Vect_V^{\omega_u}$. By \cite[Example~8.8.9(ii)]{EGNO}, $I_u(\one)$ is the regular algebra of the canonical electric subcategory $\Rep(V)\subset\cC_u$, hence is isomorphic to $A_u$ as an algebra. Applying \cite[Lemma~8.12.2(ii)]{EGNO} gives the monoidal equivalence
\begin{equation*}
    (\cC_u)_{A_u}\simeq(\cC_u)_{I_u(\one)}
\simeq\Vect_V^{\omega_u}.
\end{equation*}
\end{proof}

\begin{lemma}\label{lem:transport}
Let $\Psi:\cC\to\cD$ be a braided tensor equivalence between braided fusion categories, with tensor structure $J$, and let $A$ be a connected \'etale algebra in $\cC$. Then $(M,\mu)\mapsto(\Psi M,\Psi(\mu)\circ J_{M,A})$ is a monoidal equivalence $\cC_A\to\cD_{\Psi(A)}$. If $\varphi:B\to B'$ is an isomorphism of connected \'etale algebras in $\cD$, then $(M,\mu)\mapsto
(M,\mu\circ(\id_M\otimes\varphi^{-1}))$ defines a monoidal equivalence $\cD_B\to\cD_{B'}$.
\end{lemma}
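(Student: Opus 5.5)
The plan is to verify both assertions directly from the definitions of \cite[\S3.3]{DMNO}, treating the first as the main part and the second as a special case of the same kind of check.

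For the first assertion, define $\widehat\Psi(M,\mu):=(\Psi M,\Psi(\mu)\circ J_{M,A})$ on objects and $\widehat\Psi(f)=\Psi(f)$ on morphisms.

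\begin{enumerate}[(i)]
\item First I check that $\Psi(A)$, with multiplication $\Psi(m)\circ J_{A,A}$ and unit $\Psi(u)\circ J_0$, is a connected \'etale algebra in $\cD$.
\begin{itemize}
\item Associativity and unitality follow from the monoidal coherence of $J$.
\item Commutativity uses that $\Psi$ is braided: $\Psi(c_{A,A})J_{A,A}=J_{A,A}c_{\Psi A,\Psi A}$.
\item Separability transports the bimodule splitting $A\to A\otimes A$ via $J^{-1}_{A,A}\Psi(\cdot)$.
\item Connectedness holds because $\Psi$ is fully faithful and $\Psi(\one)\cong\one$.
\end{itemize}
\item Next I check that $\widehat\Psi(M,\mu)$ is a right $\Psi(A)$-module; this is the same coherence computation, with the associativity square for $J$ and $\alpha$. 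Module maps go to module maps by naturality of $J$.
\item Then $\widehat\Psi$ is an equivalence. A quasi-inverse is built in the same way from a braided quasi-inverse of $\Psi$, which exists by the argument in the proof of Lemma~\ref{lem:Epreserved}, composed with the algebra isomorphism $\Psi^{-1}\Psi(A)\cong A$ coming from the unit $\varepsilon$.
\item For monoidality, the tensor product in $\cC_A$ is the coequalizer of the two maps $M\otimes A\otimes N\rightrightarrows M\otimes N$. One map uses $\mu$. The other uses the left action $\mu\circ c^{-1}_{M,A}$ on $N$, after reassociating. I apply $\Psi$ and compose with $J$ (i.e.\ $J_{M\otimes A,N}\circ(J_{M,A}\otimes\id)$). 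Because $\Psi$ is braided, $\Psi(c^{-1}_{M,A})$ corresponds to $c^{-1}_{\Psi M,\Psi A}$. Because $\Psi$ is exact (an equivalence of semisimple categories), it preserves coequalizers. So $J_{M,N}$ descends to an isomorphism $\widehat\Psi M\otimes_{\Psi A}\widehat\Psi N\to\widehat\Psi(M\otimes_AN)$. Its coherence is inherited from that of $J$ through the universal property of the coequalizer.
\end{enumerate}

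For the second assertion, let $\varphi:B\to B'$ be an algebra isomorphism and send $(M,\mu)\mapsto(M,\mu\circ(\id_M\otimes\varphi^{-1}))$.
\begin{itemize}
\item This is a $B'$-module structure because $\varphi^{-1}$ is multiplicative and unital.
\item The inverse functor uses $\varphi$.
\item Naturality of the braiding gives $(\id\otimes\varphi^{-1})c^{-1}_{M,B'}=c^{-1}_{M,B}(\varphi^{-1}\otimes\id)$. Hence the two coequalizer diagrams defining $M\otimes_BN$ and $M\otimes_{B'}N$ are identified by precomposition with $\id\otimes\varphi^{-1}\otimes\id$, an isomorphism.
\item The identity maps on $M\otimes N$ then descend to a tensor structure that is the identity on underlying objects, and coherence is immediate.
\end{itemize}

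The main obstacle is step (iv) of the first part: showing the transported left action (built from $c^{-1}$) matches $J$ correctly, so that the relative tensor products correspond. This is exactly where the braided (not merely monoidal) hypothesis on $\Psi$ enters. I would handle it by writing both parallel arrows explicitly and using the hexagon-compatibility identity $\Psi(c_{X,Y})J_{X,Y}=J_{Y,X}c_{\Psi X,\Psi Y}$ together with naturality of $J$.
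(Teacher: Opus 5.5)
Your proposal is correct and follows the same route as the paper's proof. Both transport the algebra and module structures along $\Psi$, use braidedness to match the left actions $\mu_N\circ c^{-1}_{N,A}$ that define the bimodule $N_-$, use exactness of $\Psi$ to identify the coequalizers defining $\otimes_A$, take a braided quasi-inverse for the inverse functor, and transport along $\varphi$ for the second claim. Your version supplies verifications the paper leaves implicit: étaleness and connectedness of $\Psi(A)$, the identity $\Psi(c^{-1}_{X,Y})J_{Y,X}=J_{X,Y}c^{-1}_{\Psi X,\Psi Y}$, and the naturality argument for $\varphi$. One harmless slip: in step (iv) you wrote $\mu\circ c^{-1}_{M,A}$ for the left action on $N$, where it should be $\mu_N\circ c^{-1}_{N,A}$.
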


\begin{proof}
A strong monoidal functor transports algebra objects and their modules. For the tensor structure on $\cC_A$ used here, recall that a right $A$-module $M$ is regarded as the bimodule $M_-$ via~\cite[\S3.3]{DMNO}. Since $\Psi$ is braided, it carries this left action to the corresponding left action on $(\Psi M)_-$. Moreover, as an equivalence of abelian categories, $\Psi$ preserves the coequalizers defining relative tensor products. Hence there are natural isomorphisms $\Psi M\otimes_{\Psi A}\Psi N\cong\Psi(M\otimes_A N)$ compatible with associativity and units. Thus $\cC_A\simeq_\otimes\cD_{\Psi(A)}$. A braided tensor quasi-inverse of $\Psi$ gives the inverse equivalence.

The second assertion is obtained by transporting module structures along the algebra isomorphism $\varphi:B\to B'$.
\end{proof}

\begin{corollary}\label{cor:pointedequiv}
If $\Phi:\cC_u\to\cC_v$ is a braided tensor equivalence, then $\Vect_V^{\omega_u}$ and $\Vect_V^{\omega_v}$ are monoidally equivalent.
\end{corollary}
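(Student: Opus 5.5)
The plan is to chain the results just established into a sequence of monoidal equivalences between $\Vect_V^{\omega_u}$ and $\Vect_V^{\omega_v}$.

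First, apply Proposition~\ref{prop:reconstruction} to $u$. It gives a monoidal equivalence $\Vect_V^{\omega_u}\simeq(\cC_u)_{A_u}$.

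Second, apply the first part of Lemma~\ref{lem:transport} to the braided tensor equivalence $\Phi$ and the connected \'etale algebra $A_u$. This yields a monoidal equivalence $(\cC_u)_{A_u}\simeq(\cC_v)_{\Phi(A_u)}$. Here we use Lemma~\ref{lem:regular}, which says $A_u$ is connected \'etale. Since $\Phi$ is braided, $\Phi(A_u)$ is again connected \'etale.

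Third, Lemma~\ref{lem:regular} also provides an algebra isomorphism $\varphi:\Phi(A_u)\to A_v$ in $\cC_v$. The second part of Lemma~\ref{lem:transport}, applied to $\varphi$, then gives a monoidal equivalence $(\cC_v)_{\Phi(A_u)}\simeq(\cC_v)_{A_v}$.

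Finally, Proposition~\ref{prop:reconstruction} applied to $v$ gives $(\cC_v)_{A_v}\simeq\Vect_V^{\omega_v}$. Composing the four equivalences
\begin{equation*}
\Vect_V^{\omega_u}\simeq(\cC_u)_{A_u}\simeq(\cC_v)_{\Phi(A_u)}\simeq(\cC_v)_{A_v}\simeq\Vect_V^{\omega_v}
\end{equation*}
gives the claim, since a composite of monoidal equivalences is a monoidal equivalence.

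The only point needing care is that all categories of modules carry the same tensor structure, namely the one of~\cite[\S3.3]{DMNO} fixed before Proposition~\ref{prop:reconstruction}. This requires two checks. One is that the transport along $\varphi$ respects the left action built from $c^{-1}_{M,A}$. This holds because $\varphi$ is a morphism in a braided category, so naturality of the braiding applies. The other is that the equivalence of Proposition~\ref{prop:reconstruction} is taken with respect to that same tensor structure, which is how it was stated. Beyond these compatibilities there is no real obstacle; the argument is a direct composition.
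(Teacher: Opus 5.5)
Your proposal is correct and is essentially the paper's own proof: the paper composes the same chain $\Vect_V^{\omega_u}\simeq(\cC_u)_{A_u}\simeq(\cC_v)_{\Phi(A_u)}\cong(\cC_v)_{A_v}\simeq\Vect_V^{\omega_v}$, using Proposition~\ref{prop:reconstruction}, both parts of Lemma~\ref{lem:transport}, and the algebra isomorphism $\Phi(A_u)\cong A_v$ from Lemma~\ref{lem:regular}. Your remarks on using the same module tensor structure throughout are consistent with the paper's conventions and add detail without changing the argument.
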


\begin{proof}
Proposition~\ref{prop:reconstruction} recovers the two pointed categories from their regular algebras. Lemma~\ref{lem:transport} carries the module category along $\Phi$, and Lemma~\ref{lem:regular} identifies the transported algebra with $A_v$. Applying Lemma~\ref{lem:transport} to that algebra isomorphism gives
\begin{equation*}
\Vect_V^{\omega_u}\simeq(\cC_u)_{A_u}\simeq(\cC_v)_{\Phi(A_u)}
\cong(\cC_v)_{A_v}\simeq\Vect_V^{\omega_v}.
\end{equation*}
\end{proof}

\begin{remark}
Corollary~\ref{cor:pointedequiv} can also be obtained from the classification of Lagrangian subcategories in~\cite[Theorem~4.5]{DGNO07}. Our approach above gives a direct reconstruction through the regular algebra $A_u$.
\end{remark}

\subsection{The cocycle class}

A monoidal equivalence of pointed categories determines an automorphism of the grading group and a relation between the associators. We spell out that relation with our conventions.

For $\beta:\Gamma\times\Gamma\to\CC^\times$ we put $(\partial\beta)(x,y,z):=\beta(y,z)\,\beta(x,y+z)\,\beta(x+y,z)^{-1}\beta(x,y)^{-1}$. For a $3$-cochain $\nu:\Gamma^3\to\CC^\times$ and $M\in\Aut(\Gamma)$, we put $(M^*\nu)(x,y,z):=\nu(Mx,My,Mz)$.

\begin{lemma}\label{lem:pointed}
Let $\nu,\nu'$ be normalized $3$-cocycles on a finite abelian group $\Gamma$. Then $\Vect_\Gamma^\nu\simeq_\otimes\Vect_\Gamma^{\nu'}$ if and only if there exist $M\in\Aut(\Gamma)$ and a $2$-cochain $\beta$ such that $M^*\nu'=\nu\,\partial\beta$.
\end{lemma}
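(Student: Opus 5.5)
We prove both directions by working with the explicit structure of a tensor functor between pointed categories, in the conventions of Definition~\ref{def:Vec}.

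\emph{Forward direction.} Let $F:\Vect_\Gamma^\nu\to\Vect_\Gamma^{\nu'}$ be a monoidal equivalence with tensor structure $J_{X,Y}:F(X)\otimes F(Y)\to F(X\otimes Y)$.

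First, $F$ sends simples to simples and preserves fusion rules. Hence there is a bijection $M:\Gamma\to\Gamma$ with $F(\delta_x)\cong\delta_{Mx}$. Since $\delta_x\otimes\delta_y\cong\delta_{x+y}$, the map $M$ is additive, so $M\in\Aut(\Gamma)$.

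Second, fix isomorphisms $\phi_x:F(\delta_x)\to\delta_{Mx}$, taking $\phi_0$ compatible with the unit isomorphism $F(\one)\cong\one$. Define $\beta(x,y)\in\CC^\times$ as the scalar by which
\begin{equation*}
\delta_{Mx}\otimes\delta_{My}\xrightarrow{\phi_x^{-1}\otimes\phi_y^{-1}}F\delta_x\otimes F\delta_y\xrightarrow{J}F(\delta_x\otimes\delta_y)\xrightarrow{F(\iota_{x,y})}F\delta_{x+y}\xrightarrow{\phi_{x+y}}\delta_{Mx+My}
\end{equation*}
differs from $\iota_{Mx,My}$. Normalization of $\beta$ follows from the unit compatibility of $J$.

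Third, write the hexagon (associativity) axiom for $J$ on the triple $(\delta_x,\delta_y,\delta_z)$. Every morphism in it is a scalar multiple of a composite of $\iota$'s. The $\iota$-composites agree by strict associativity of the underlying graded vector spaces, so comparing scalars gives
\begin{equation*}
\nu'(Mx,My,Mz)\,\beta(y,z)\,\beta(x,y+z)=\beta(x,y)\,\beta(x+y,z)\,\nu(x,y,z).
\end{equation*}
This is exactly $M^*\nu'=\nu\,\partial\beta$ with the paper's definition of $\partial\beta$.

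The main care point is orientation: which side $\nu$ versus $\nu'$ lands on, and whether $\beta$ or $\beta^{-1}$ appears. A wrong orientation would only replace $\beta$ by $\beta^{-1}$, and the statement allows an arbitrary $2$-cochain, so this is harmless.

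\emph{Converse.} Given $M\in\Aut(\Gamma)$ and a $2$-cochain $\beta$ with $M^*\nu'=\nu\,\partial\beta$, we may take $\beta$ normalized after modifying it by a coboundary of a $1$-cochain; this keeps $\partial\beta$ unchanged since $\partial\partial=1$.

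Define $F$ on objects by regrading, $F(X)_{Mx}=X_x$, acting as the identity on linear maps. Define $J$ on homogeneous components to be multiplication by $\beta(x,y)^{\pm1}$, with the sign chosen to match the displayed identity above. The same scalar computation, run in reverse, verifies the associativity axiom, and normalization gives the unit axioms. $F$ is an equivalence because $M$ is bijective: the functor $F'$ built from $M^{-1}$ and the correspondingly inverted cochain is an inverse.

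We expect no real obstacle. The only delicate step is the sign and convention bookkeeping in the hexagon comparison, which we will do once explicitly on basis vectors $1_x\otimes1_y\otimes1_z$.
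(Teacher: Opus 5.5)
Your proof is correct. The paper gives no argument of its own and only cites EGNO's equation (2.31); your proof is exactly the standard computation behind that reference: read the grading automorphism off the simples, extract $\beta$ from the tensor structure on the $\delta_x$, and compare scalars in the associativity axiom.

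One bookkeeping point. With the paper's convention for $\partial$, your displayed identity actually gives $M^*\nu'=\nu\,(\partial\beta)^{-1}=\nu\,\partial(\beta^{-1})$, not $\nu\,\partial\beta$. This is harmless, as you say, but it means that in the converse the tensorator must act by $\beta(x,y)^{-1}$, matching Remark~\ref{rem:converse}. Your normalization step is also fine, and the reason is worth stating: $\partial\beta=M^*\nu'/\nu$ is normalized, which forces $\beta(0,\cdot)=\beta(\cdot,0)\equiv\beta(0,0)$, and this constant is the coboundary of a constant $1$-cochain.
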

This is a standard result by~\cite[Equation~(2.31)]{EGNO}.

To extract the restrictions imposed by this relation, we use two functions of the cocycle class. For 3-cocycle $\nu:V^3\to\CC^\times$ we define~\cite[Equation~(3.14)]{AC}~\cite[Lemma~2.12]{DavydovSimmons}
\begin{equation*}
    q_\nu(a):=\prod_{t\in\Fp}\nu(a,ta,a),\qquad\Alt(\nu)(a_1,a_2,a_3):=\prod_{\sigma\in S_3}\nu\bigl(a_{\sigma(1)},a_{\sigma(2)},a_{\sigma(3)}\bigr)^{\sgn(\sigma)}.
\end{equation*}

\begin{lemma}\label{lem:invariants}
The functions $q$ and $\Alt$ are multiplicative, with $q_{\nu\nu'}=q_\nu q_{\nu'}$ and $\Alt(\nu\nu')=\Alt(\nu)\Alt(\nu')$. They are trivial on coboundaries, so $q_{\partial\beta}\equiv1$ and $\Alt(\partial\beta)\equiv1$ for every $\beta:V\times V\to\CC^\times$. They are also natural under $M\in GL_3(\Fp)$, in the sense that $q_{M^*\nu}(a)=q_\nu(Ma)$ and $\Alt(M^*\nu)(a,b,c)=\Alt(\nu)(Ma,Mb,Mc)$.

For the cocycles $\omega_u$, these functions are $q_{\omega_u}(a)=\zeta^{Q(a)}$ and $\Alt(\omega_u)(a,b,c)=\zeta^{u\det(a,b,c)}$ for every $u\in\Fp$.
\end{lemma}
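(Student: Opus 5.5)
Multiplicativity and naturality come straight from the definitions. Both $q_\nu$ and $\Alt(\nu)$ are finite products of values of $\nu$, so they are multiplicative in $\nu$. For $M\in GL_3(\Fp)$, linearity gives $M(ta)=tMa$, so $q_{M^*\nu}(a)=\prod_t\nu(Ma,tMa,Ma)=q_\nu(Ma)$. The same substitution gives the naturality of $\Alt$.

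For triviality on coboundaries I would argue as follows.

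\emph{The alternating invariant.} Write $(\partial\beta)(x,y,z)=\beta(y,z)\beta(x,y+z)\beta(x+y,z)^{-1}\beta(x,y)^{-1}$ and take the signed product over $S_3$. Transposing the first two entries pairs $\beta(y,z)\beta(x,y+z)$ in $\sigma$ with $\beta(x,z)\beta(y,x+z)$ in the transposed permutation, which carries the opposite sign. The cleanest bookkeeping is to group the terms by their pair of arguments:
\begin{itemize}
\item each factor $\beta(s,t)$ with $s,t\in\{a_1,a_2,a_3\}$ occurs in two permutations of opposite sign;
\item each factor $\beta(s,t+w)$ occurs with the matching factor $\beta(s,w+t)$ in two permutations, and these cancel because the group is abelian;
\item the factors $\beta(s+t,w)$ cancel in the same way.
\end{itemize}
I will just organize these cancellations. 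If $\beta$ is valued in $\CC^\times$, the combinatorial cancellation works verbatim.

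\emph{The quadratic invariant.} This is the subtler step. I would compute
\[
q_{\partial\beta}(a)=\prod_{t}\frac{\beta(ta,a)\,\beta(a,(t+1)a)}{\beta((t+1)a,a)\,\beta(a,ta)}.
\]
Reindexing $t\mapsto t+1$ over $\Fp$ shows that $\prod_t\beta(ta,a)=\prod_t\beta((t+1)a,a)$ and $\prod_t\beta(a,(t+1)a)=\prod_t\beta(a,ta)$. The product is therefore $1$. Note that this uses that $\Fp a$ is a cyclic subgroup whose order is exactly $p$, which is why the product runs over $t\in\Fp$.

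\emph{Evaluation on $\omega_u$.} By multiplicativity I can evaluate $\kappa$ and $\tau_u$ separately.
\begin{itemize}
\item For $\tau_u$: $\det(a,ta,a)=0$, so $q_{\tau_u}\equiv1$. The determinant is alternating, so $\Alt(\tau_u)=\zeta^{6\cdot\frac u6\det}=\zeta^{u\det}$.
\item For $\Alt(\kappa)$: the exponent $\sum_i[x_i]C_p(y_i,z_i)$ is symmetric in the last two slots. The signed sum over $S_3$ pairs each $\sigma$ with $\sigma\circ(23)$, which has the opposite sign, so $\Alt(\kappa)\equiv1$.
\item For $q_\kappa$: I need $\sum_{t\in\Fp}[a_i]C_p(ta_i,a_i)\equiv a_i^2 \pmod p$ for each coordinate. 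When $a_i=0$ this is trivial. When $a_i\neq0$, as $t$ ranges over $\Fp$ the element $ta_i$ ranges over all of $\Fp$. Hence $\sum_tC_p(ta_i,a_i)=\sum_{r\in\Fp}C_p(r,a_i)=\#\{r:[r]+[a_i]\ge p\}=[a_i]$, and the exponent is $[a_i]^2\equiv a_i^2$. Summing over $i$ gives $q_{\omega_u}(a)=\zeta^{Q(a)}$.
\end{itemize}

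The main obstacle is the bookkeeping for $\Alt(\partial\beta)\equiv1$. Since this is standard (it is the statement that $\Alt$ factors through $H^3$, cf.\ \cite[Lemma~2.12]{DavydovSimmons}), I could cite it instead, but I would present the explicit pairing argument above.
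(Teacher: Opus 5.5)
Your argument is correct and follows the paper's proof closely: multiplicativity and naturality by inspection, the shift $t\mapsto t+1$ for $q_{\partial\beta}$, symmetry of $C_p$ for $\Alt(\kappa)$, and the alternating property of $\det$ for $\Alt(\tau_u)$. Two sub-steps are handled differently.

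\begin{enumerate}
\item For $\Alt(\partial\beta)\equiv1$ the paper only cites \cite[Lemma~2.12]{DavydovSimmons}, i.e.\ that the alternation descends to $H^3$. You verify it directly at the cochain level. This is more self-contained and does not rely on the cohomological interpretation of $\Alt$.
\item For $\sum_{t\in\Fp}C_p(ta_i,a_i)=[a_i]$ the paper telescopes $C_p(tr,r)=([tr]+[r]-[(t+1)r])/p$, which works uniformly in $r$. You count carries after substituting $r=ta_i$ and treating $a_i=0$ separately. Both are fine.
\end{enumerate}

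One correction to your bookkeeping for $\Alt(\partial\beta)$. The factor $\beta(a_i,a_j)$ does not occur in two permutations of opposite sign.
\begin{itemize}
\item It occurs once as $\beta(y,z)$, with exponent $+1$ in $\partial\beta$, for $(\sigma(1),\sigma(2),\sigma(3))=(k,i,j)$.
\item It occurs once as $\beta(x,y)^{-1}$ for $(\sigma(1),\sigma(2),\sigma(3))=(i,j,k)$.
\end{itemize}
These two permutations differ by a $3$-cycle, so they have the \emph{same} sign. The cancellation therefore comes from the opposite exponents of $\beta(y,z)$ and $\beta(x,y)$ inside $\partial\beta$. With opposite permutation signs, these two occurrences would combine to $\beta(a_i,a_j)^{\pm2}$ rather than cancel.

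Similarly, your opening pairing of $\beta(y,z)\beta(x,y+z)$ with $\beta(x,z)\beta(y,x+z)$ cancels nothing. The effective pairings are the ones in your second and third groups: the $\beta(x,y+z)$ factors pair under transposing the last two slots, and the $\beta(x+y,z)$ factors pair under transposing the first two. With that fix the cochain-level proof is complete.
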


\begin{proof}
Multiplicativity follows directly from the definitions, and naturality follows from $M(ta)=tMa$.

For the triviality of $q_\nu$ on a coboundary, expand $(\partial\beta)(a,ta,a)=\beta(ta,a)\beta(a,(t+1)a)\,\beta((t+1)a,a)^{-1}\beta(a,ta)^{-1}$. Since translation by $1$ permutes $\Fp$, telescoping gives $\prod_t\beta(ta,a)/\beta((t+1)a,a)=1$ and $\prod_t\beta(a,(t+1)a)/\beta(a,ta)=1$. Thus $q_{\partial\beta}\equiv1$.

For $\Alt_\nu$, the expression of $\Alt(\nu)(a_1,a_2,a_3)$ is the cocycle-level representative of the standard alternation map $\Alt_3:H^3(V,\CC^\times)\longrightarrow\Hom(\Lambda^3V,\CC^\times)$~\cite[Lemma~2.12]{DavydovSimmons}. In particular, it depends only on the cohomology class of the cocycle. Thus $\Alt(\partial\beta)\equiv1$ since every coboundary is trivial.

To evaluate $q$ and $\Alt$ on $\omega_u=\kappa\tau_u$, first note that $\det(a,ta,a)=0$, so $q_{\tau_u}(a)=1$.  Moreover, for $r\in\Fp$, $\sum_{t\in\Fp}C_p(tr,r)=[r]$ since $t\mapsto t+1$ permutes $\Fp$. Hence $q_{\kappa}(a)=\zeta^{\sum_i[a_i]^2}=\zeta^{Q(a)}$, and therefore $q_{\omega_u}(a)=\zeta^{Q(a)}$.

For $\Alt$, symmetry of $C_p$ gives $\Alt(\kappa)=1$. Since $\det$ is alternating,
\begin{equation*}
	\Alt(\tau_u)(a_1,a_2,a_3)=\prod_{\sigma\in S_3}\zeta^{\frac u6\sgn(\sigma)\det(a_{\sigma(1)},a_{\sigma(2)},a_{\sigma(3)})}=\zeta^{u\det(a_1,a_2,a_3)}.
\end{equation*}
Thus, by multiplicativity, $\Alt(\omega_u)(a,b,c)=\zeta^{u\det(a,b,c)}$.
\end{proof}

The cyclic invariant and the alternation now give the obstruction to braided equivalence.

\begin{theorem}[Obstruction to braided equivalence]\label{thm:step2}
Let $u,v\in\Fp^\times$. If $\cC_u$ and $\cC_v$ are braided equivalent, then $u=\pm v$.
\end{theorem}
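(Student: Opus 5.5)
The plan chains the results of this section together. Suppose $\Phi:\cC_u\to\cC_v$ is a braided tensor equivalence. By Corollary~\ref{cor:pointedequiv}, which combines Lemma~\ref{lem:Epreserved}, Lemma~\ref{lem:regular}, Proposition~\ref{prop:reconstruction} and Lemma~\ref{lem:transport}, the pointed categories $\Vect_V^{\omega_u}$ and $\Vect_V^{\omega_v}$ are monoidally equivalent. Lemma~\ref{lem:pointed} then yields $M\in\Aut(V)=GL_3(\Fp)$ and a $2$-cochain $\beta$ with
\begin{equation*}
M^*\omega_v=\omega_u\,\partial\beta .
\end{equation*}

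We then apply the two invariants of Lemma~\ref{lem:invariants} to both sides, using multiplicativity, triviality on coboundaries and naturality. The quadratic invariant gives $q_{\omega_v}(Ma)=q_{\omega_u}(a)$, that is, $\zeta^{Q(Ma)}=\zeta^{Q(a)}$. Since both exponents lie in $\Fp$, this means $Q(Ma)=Q(a)$ for all $a\in V$. The alternating invariant gives $\zeta^{v\det(Ma,Mb,Mc)}=\zeta^{u\det(a,b,c)}$. Using $\det(Ma,Mb,Mc)=\det M\cdot\det(a,b,c)$ and evaluating at $(\mathbf e_1,\mathbf e_2,\mathbf e_3)$, we obtain $v\det M=u$ in $\Fp$.

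It remains to show $\det M=\pm1$. Because $p$ is odd, $Q(a)=a^{\mathsf T}a$ determines its symmetric bilinear form through polarization, $B(a,b)=\tfrac12\bigl(Q(a+b)-Q(a)-Q(b)\bigr)$. Hence $Q(Ma)=Q(a)$ for all $a$ implies $M^{\mathsf T}M=I$. Taking determinants gives $(\det M)^2=1$ in the field $\Fp$, so $\det M=\pm1$, and therefore $u=\pm v$.

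I expect no serious obstacle, since the heavy lifting (recovering the pointed category and computing the invariants) is already contained in the cited results. The one point requiring care is to pass from equalities of values of $\zeta$ to equalities in $\Fp$. This is valid because $\zeta$ is a primitive $p$-th root of unity, so $\zeta^s=\zeta^t$ with $s,t\in\Fp$ forces $s=t$. A second point is that the polarization step uses the invertibility of $2$, which holds because $p\ge5$.
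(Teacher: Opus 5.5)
Your proposal is correct and follows the paper's proof essentially step by step. Corollary~\ref{cor:pointedequiv} and Lemma~\ref{lem:pointed} give $M\in GL_3(\Fp)$ and $\beta$ with $M^*\omega_v=\omega_u\,\partial\beta$; the invariant $q$ then forces $Q(Ma)=Q(a)$, so polarization yields $M^{\mathsf T}M=I$ and $\det M=\pm1$; and $\Alt$ evaluated at $(\mathbf e_1,\mathbf e_2,\mathbf e_3)$ gives $v\det M=u$, exactly as in the paper (your remark that inverting $2$ only needs $p$ odd is also right).
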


\begin{proof}
Assume that $\Phi:\cC_u\longrightarrow\cC_v$ is a braided tensor equivalence. By Corollary~\ref{cor:pointedequiv}, $\Phi$ induces a monoidal equivalence $\Vect_V^{\omega_u}\simeq_\otimes\Vect_V^{\omega_v}$. Applying Lemma~\ref{lem:pointed} to this monoidal equivalence gives $M\in GL_3(\Fp)$ and a $2$-cochain $\beta:V\times V\to\CC^\times$ such that $M^*\omega_v=\omega_u\,\partial\beta$.

We now apply the two cohomological invariants of Lemma~\ref{lem:invariants} to this relation. Applying $q$ to $M^*\omega_v=\omega_u\,\partial\beta$ gives
\begin{equation*}
	\zeta^{Q(Ma)}=q_{\omega_v}(Ma)=q_{M^*\omega_v}(a)=q_{\omega_u}(a)q_{\partial\beta}(a)=\zeta^{Q(a)}
\end{equation*}
Hence $Q(Ma)=Q(a)$ for all $a\in V$. Polarizing $Q(a)$ gives $B(x,y):=Q(x+y)-Q(x)-Q(y)=2x^{\mathsf T}y$, so $B(Mx,My)=B(x,y)$.  Since $2\in\Fp^\times$, this implies $M^{\mathsf T}M=I$, therefore $\det M=\pm1$.

Applying $\Alt$ to $M^*\omega_v=\omega_u\,\partial\beta$ gives
\begin{equation*}
	\zeta^{v\det(Ma,Mb,Mc)}=\Alt(\omega_v)(Ma,Mb,Mc)=\Alt(M^*\omega_v)(a,b,c)=\Alt(\omega_u)(a,b,c)=\zeta^{u\det(a,b,c)}
\end{equation*}
where $\Alt(\partial\beta)=1$ (Lemma~\ref{lem:invariants}) has been used. Evaluating at $(a,b,c)=(\mathbf e_1,\mathbf e_2,\mathbf e_3)$ yields $v\det M=u$. Since $\det M=\pm1$, we conclude $u=\pm v$.
\end{proof}

\begin{remark}\label{rem:converse}
Although it is not needed for the obstruction, the converse holds. Let $I:V\to V, I(x)=-x$ and define $\epsilon(r)=0$ for $r=0$ and $\epsilon(r)=1$ otherwise, and $\beta_I(a,b):=\zeta^{\sum_i[a_i]\epsilon(b_i)}$. Since $[-r]=-[r]+p\epsilon(r)$, the definition of the carry gives
\begin{equation*}
	C_p(-b,-c)=-C_p(b,c)+\epsilon(b)+\epsilon(c)-\epsilon(b+c).
\end{equation*}
A direct substitution then yields $I^*\kappa=\kappa\,\partial\beta_I$. On the other hand, since the determinant is trilinear, $I^*\tau_u=\tau_{-u}$. Hence, $I^*\omega_u=\omega_{-u}\,\partial\beta_I$.

Reading Lemma~\ref{lem:pointed} in the converse direction, this relation defines a monoidal equivalence $\Vect_V^{\omega_{-u}}\simeq_\otimes\Vect_V^{\omega_u}$ with grading automorphism $I$. Explicitly, $F(X)_y=X_{-y}$ and the tensorator acts by $\beta_I(x,y)^{-1}$ on $X_x\otimes Y_y$. Passing to Drinfel'd centers therefore gives $\cC_{-u}\simeq_{\mathrm{br}}\cC_u$. Together with Theorem~\ref{thm:step2}, it follows that the categories $\cC_u$, $u\in\Fp^\times$, form exactly $(p-1)/2$ braided equivalence classes.
\end{remark}

%======================================================================
\section{Completion of the proof}\label{sec:final}
%======================================================================

\begin{proof}[Proof of the Main Theorem]
Part~(A) follows from Theorem~\ref{thm:step1} and Corollary~\ref{cor:STW}. Part~(B) is Theorem~\ref{thm:step2}, since every ribbon equivalence is a braided equivalence~\cite[\S2.4]{KT}.
\end{proof}

\begin{proof}[Proof of Corollary~\ref{cor:p5}]
In $\mathbb F_5$ one has $-1=4$, so $2\notin\{1,4\}=\{\pm1\}$. By (A), $\cC_1$ and $\cC_2$ have the same $\widetilde S,S,T,\widetilde W,W$ under the vacuum-preserving labeling $\Lambda_{12}$. Part~(B) rules out a braided equivalence and therefore a ribbon equivalence. The rank and dimensions are given in Remark~\ref{rem:dims}.
\end{proof}

\begin{remark}
Within the proof of the Main Theorem, the ribbon structure is used only in Proposition~\ref{prop:restriction}, where the restriction functor is shown to preserve the canonical twist, and in Theorem~\ref{thm:step1}, where ribbon functoriality of the Reshetikhin--Turaev construction gives equality of framed link invariants. Corollary~\ref{cor:STW} then identifies the resulting $S$, $T$, and $W$ data.

The obstruction in Theorem~\ref{thm:step2} uses only the braided tensor structure. Lemma~\ref{lem:invertible} identifies the electric subcategory intrinsically, the results on connected \'etale algebras and module categories in \cite{DMNO} reconstruct the underlying pointed category, and Lemmas~\ref{lem:pointed} and~\ref{lem:invariants} then give the cocycle obstruction. Hence part~(B) excludes every braided equivalence, not only the ones inducing $\Lambda_{uv}$.

\end{remark}

\begin{remark}
The braided-equivalence classification in Theorem~\ref{thm:step2} and Remark~\ref{rem:converse} also follows from the classification of pointed fusion categories of dimension $p^3$ up to weak Morita equivalence in~\cite[\S1.1 and \S2.7]{MMU}, together with~\cite[Theorem~8.12.3]{EGNO}. In the cohomological coordinates of \cite{MMU}, our cocycle class is $[\omega_u]=y_1^2+y_2^2+y_3^2+u\,\beta(x_1x_2x_3)$. In this work we do not provide a proof of Theorem~\ref{thm:step2} along this approach.
\end{remark}

%======================================================================
\section{Uncolored 3-manifold partition function}
\label{sec:observable}
%======================================================================

The preceding proof gives inequivalent categories whose colored RT invariants agree for every link with at most two components. We now distinguish these classes by the partition function on a closed oriented three-manifold, with no line insertions or chosen anyon labels. The calculation combines a three-component Borromean amplitude with the common twists through surgery.

Let $B$ be the zero-framed Borromean link obtained by closing the braid
\begin{equation*}
	\mathfrak{b}_{\mathrm{Bor}}=(\sigma_2^{-1}\sigma_1)^3,
\end{equation*}
with all strands oriented in the braid direction, positive generators represented by the braiding, and the rightmost factor acting first on a left-associated tensor product. Here $\sigma_i$ is the positive crossing between strands currently in positions $i$ and $i+1$, and $\sigma_i^{-1}$ is the inverse crossing. Thus the word repeats $\sigma_1$ followed by $\sigma_2^{-1}$ three times, giving six crossings in total. This is the braid used to define the Borromean tensor in~\cite[Definition~4.1]{KMS}. Let $M_p$ be the oriented manifold obtained by integral surgery on $B$ with coefficient $+p$ on each component, measured relative to the zero framing. Its linking matrix is $pI_3$, so $H_1(M_p,\mathbb Z)\cong(\mathbb Z/p)^3$. Write $Z_u(M)$ for the RT invariant of $M$ associated with $\cC_u$, normalized by $Z_u(S^3)=D^{-1}=p^{-3}$~\cite[Chapter~II, \S2.2, equation~(2.2.b)]{Tu}.

\begin{theorem}\label{thm:observable}
For every prime $p\ge5$ and $u\in\Fp^\times$,
\begin{equation}\label{eq:observable-value}
	Z_u(M_p)=p\,G_p(-1)+p(p^2-1)\,\zeta^{-4u^{-2}},\qquad G_p(-1):=\sum_{t\in\Fp}\zeta^{-t^2}.
\end{equation}
Consequently,
\begin{equation}\label{eq:observable-separates}
	Z_u(M_p)=Z_v(M_p)\quad\Longleftrightarrow\quad u=\pm v.
\end{equation}
Thus, for each fixed $p$, this single uncolored partition function separates all $(p-1)/2$ braided equivalence classes of the family.
\end{theorem}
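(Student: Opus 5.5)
The plan is to evaluate $Z_u(M_p)$ with the Reshetikhin--Turaev surgery formula and to split the resulting color sum by the span of the fluxes. The $u$-independent part will come from Theorem~\ref{thm:step1}-type restriction. The $u$-dependent part will be an explicit character computation.

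\textbf{Surgery formula.} The linking matrix $pI_3$ is positive definite, so $\sigma_+=3$ and $\sigma_-=0$. Turaev's formula then gives
\begin{equation*}
Z_u(M_p)=D^{-1-3}\Big(\frac{D}{p_+}\Big)^{3}\sum_{X_1,X_2,X_3\in\Irr(\cC_u)}\prod_{i=1}^3 d(X_i)\,\theta_{X_i}^{\,p}\;\RT_{\cC_u}(B;X_1,X_2,X_3),
\end{equation*}
where $p_+=\sum_X d(X)^2\theta_X$. By Theorem~\ref{thm:step1}, $d$ and $\theta$ are transported by $\Lambda_{uv}$, so $p_+$ and the weights $d\theta^p$ are independent of $u$. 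I would compute $p_+$ once, using the twists $\theta_{E_\xi}=1$ and $\theta_{X_u(a,k)}=\rho(a)=g_a(a)\zeta^k$. For the weights, note that $\theta_{X(a,k)}^p=g_a(a)^p=\zeta^{-Q(a)}$ does not depend on $k$, which will simplify the later sums.

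\textbf{Rank at most two.} Group the triples $(X_1,X_2,X_3)$ by their fluxes $(a_1,a_2,a_3)$. If $P=\spanop(a_1,a_2,a_3)$ has dimension at most two, Proposition~\ref{prop:restriction} and Lemma~\ref{lem:match} show that $\RT(B;X_1,X_2,X_3)$ equals an evaluation in $\cB_P$, so these terms do not depend on $u$. I would evaluate them directly using the half-braiding formula~\eqref{eq:halfexplicit}. The Borromean closure of $(\sigma_2^{-1}\sigma_1)^3$ computes, for each pair of components, a trace of a commutator of the operators $\rho_{X_i}(a_j)$. On $P$ the multiplier is the coboundary $\partial g_a$, so after the twist weights these terms should collapse to Gauss sums in $Q$. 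Summing over $P$ should give the term $p\,G_p(-1)$, which is a quadratic Gauss sum produced by the $\zeta^{-Q(a)}$ weights together with the normalization $D^{-4}(D/p_+)^3$.

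\textbf{Full rank: the main step.} When $(a_1,a_2,a_3)$ is a basis of $V$, all three colors are of type $X_u(a_i,k_i)$. This is the step I expect to be the main obstacle. Write the Borromean evaluation as an iterated trace over $\CC^p\otimes\CC^p\otimes\CC^p$ of products of the operators $\rho_{X_i}(a_j)=g_{a_i}(a_j)\pi_{a_i,k_i}(a_j)$. The six crossings produce a group commutator in the projective representations, and the multiplier $b^u_a(x,y)=\zeta^{-\frac u2\det(a,x,y)}$ should contribute a phase of the form $\zeta^{c\,u\det(a_1,a_2,a_3)}$. I would first try the Fourier-diagonal basis from Proposition~\ref{prop:simples}, choosing $r,s$ adapted to the other two fluxes. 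The expected outcome is a Kronecker delta linking $k_i$ with $u\det(a_1,a_2,a_3)$ and the partial traces.

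Summing over $k_1,k_2,k_3$ (the weights do not depend on $k$) and then over ordered bases, parametrized by $M\in GL_3(\Fp)$ and weighted by $\zeta^{-\sum Q(a_i)}$, should produce a Gauss sum in $\det$. Completing the square should then give a phase $\zeta^{-4u^{-2}}$ with multiplicity $p(p^2-1)$ after normalization. I would verify the constants by checking consistency with $p_+$ and with the count $|GL_3(\Fp)|$.

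\textbf{Separation.} Given the formula, $Z_u(M_p)=Z_v(M_p)$ holds if and only if $\zeta^{-4u^{-2}}=\zeta^{-4v^{-2}}$, because the coefficient $p(p^2-1)$ is nonzero. This happens if and only if $u^{-2}=v^{-2}$ in $\Fp$, since $4$ is invertible, that is, if and only if $u=\pm v$. The separation of all $(p-1)/2$ classes then follows, in agreement with Remark~\ref{rem:converse}.
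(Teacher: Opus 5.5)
Your plan has a genuine gap at the step you flag as the main obstacle, and the outcomes you anticipate there are wrong, so it would not reach~\eqref{eq:observable-value} as written.

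First, you never compute the Borromean amplitude, and it does not have the form you expect. Track~\eqref{eq:halfexplicit} through the word $(\sigma_2^{-1}\sigma_1)^3$ acting on $X\otimes Y\otimes Z$ with fluxes $a,b,c$. Then $Y$ receives $\rho_Y(a)$ at the first crossing and $\rho_Y(a)^{-1}$ at the fourth. Likewise $X$ receives $\rho_X(c)^{-1}$ and later $\rho_X(c)$, and $Z$ receives $\rho_Z(b)$ and later $\rho_Z(b)^{-1}$. So no commutator of projective operators survives. The braid acts by the scalar formed from the associators at the three $\sigma_2^{-1}$ crossings, which is $\Alt(\omega_u)(a,b,c)^{-1}=\zeta^{-u\det(a,b,c)}$ by Lemma~\ref{lem:invariants}. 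Hence $\RT_{\cC_u}(B;X,Y,Z)=d(X)d(Y)d(Z)\,\zeta^{-u\det(a,b,c)}$ for every triple of simple objects, independent of the labels $k_i$. There is no Kronecker delta in the $k_i$. Your rank-$\le2$ restriction argument is also unnecessary, because $\det=0$ there.

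Second, the split of the answer you aim for is false. With this amplitude, $p_+=D=p^3$ and $\sum_{\mathrm{flux}(X)=a}d(X)^2\theta_X^p=p^3\zeta^{-Q(a)}$, one gets $Z_u(M_p)=p^{-3}\sum_{a,b,c\in V}\zeta^{-Q(a)-Q(b)-Q(c)-u\det(a,b,c)}$. In this sum:
\begin{itemize}
\item the triples with $\det(a,b,c)=0$ contribute $pG_p(-1)+(p^2-1)\bigl(G_p(-1)-1\bigr)$, not $pG_p(-1)$;
\item the full-rank triples contribute $p(p^2-1)\zeta^{-4u^{-2}}-(p^2-1)\bigl(G_p(-1)-1\bigr)$.
\end{itemize}
So the constant checks you propose would fail.

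Restricting to ordered bases also breaks the Gaussian structure you want to use: completing the square in $c$ needs $c$ to range over all of $\Fp^3$. The paper keeps the full sum. Completing the square in $c$ gives $G_p(-1)^3\zeta^{tQ(a\times b)}$ with $t=u^2/4$. The identity $Q(a\times b)=Q(a)Q(b)-(a\cdot b)^2$ then turns the $b$-sum into a ternary Gauss sum with matrix $K_a=-I+t\bigl(Q(a)I-aa^{\mathsf T}\bigr)$ and $\det K_a=-\bigl(1-tQ(a)\bigr)^2$. This form is nondegenerate, with value $G_p(-1)^3$, except on the shell $Q(a)=4u^{-2}$. That shell has $p^2+\left(\frac{-1}{p}\right)p$ points, and there the form has rank one with value $p^2G_p(-1)$. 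Only that shell produces the $u$-dependent term.

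Your final separation step is fine once~\eqref{eq:observable-value} is established.
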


\subsection{Twists and the Borromean amplitude}

First we fix the signs entering the surgery calculation. For a simple object $X$ of flux $a$, Proposition~\ref{prop:simples} makes $\rho_X(a)$ a scalar $\lambda_X$, and~\eqref{eq:halfexplicit} gives $c_{X,X}=\lambda_X\,\mathrm{flip}$. The identity $\theta_Xd(X)=\Tr(c_{X,X})$ of~\cite[Remark~2.33]{DGNO10}, together with the positive pivotal structure, gives $\theta_X=\lambda_X$, since the pivotal trace here is the ordinary linear trace and the trace of the flip on $X\otimes X$ is $\dim X$. Hence
\begin{equation}\label{eq:observable-twists}
\theta_{E_\xi}=1,
\qquad
\theta_{X_u(a,k)}=g_a(a)\zeta^k
=\eta^{-\sum_i[a_i]^2}\zeta^k,
\qquad
\theta_{X_u(a,k)}^p=\zeta^{-Q(a)}.
\end{equation}
In particular, the sign of the last exponent is negative with the conventions of Section~\ref{sec:prelim}.

For simple objects $X,Y,Z$ with fluxes $a,b,c$, respectively, write $B_u(X,Y,Z):=\RT_{\cC_u}(B;X,Y,Z)$. The six crossings act in the order $\sigma_1,\sigma_2^{-1},\sigma_1,\sigma_2^{-1},\sigma_1,\sigma_2^{-1}$, during which the three objects return to their original order and the operators $\rho_X$ on each object cancel in inverse pairs. The three pairs of reassociations contribute the scalar
\begin{equation*}
	\frac{\omega_u(b,a,c)}{\omega_u(b,c,a)}\frac{\omega_u(c,b,a)}{\omega_u(c,a,b)}\frac{\omega_u(a,c,b)}{\omega_u(a,b,c)}=\Alt(\omega_u)(a,b,c)^{-1}.
\end{equation*}
Taking the pivotal trace and applying Lemma~\ref{lem:invariants} gives
\begin{equation}\label{eq:observable-borromean}
	B_u(X,Y,Z)=d(X)d(Y)d(Z)\,\zeta^{-u\det(a,b,c)}.
\end{equation}
This includes zero fluxes and is independent of the projective-character labels. The braid closure has zero self-framing on each component, since every crossing is between distinct components. Its pairwise linking numbers vanish, although its three-component amplitude detects the alternating cocycle.

\subsection{The surgery sum}

The dimension formulas give $D^2=\sum_Xd(X)^2=p^6$ and, for every flux $a$,
\begin{equation}\label{eq:observable-flux-weight}
	\sum_{\substack{X\in\Irr(\cC_u)\\\mathrm{flux}(X)=a}}d(X)^2=p^3.
\end{equation}
Moreover,~\eqref{eq:observable-twists} implies
\begin{equation*}
	\Delta_\pm:=\sum_X d(X)^2\theta_X^{\pm1}=p^3+p^2\sum_{a\ne0}g_a(a)^{\pm1}\sum_{k\in\Fp}\zeta^{\pm k}=p^3=D.
\end{equation*}
Thus the signature-dependent factor in the RT surgery formula is trivial. In our normalization, if a framed $m$-component link $J$ presents $M$, then
\begin{equation}\label{eq:observable-surgery}
Z_u(M)=D^{-m-1}
\sum_{X_1,\ldots,X_m\in\Irr(\cC_u)}
\left(\prod_{j=1}^m d(X_j)\right)
\RT_{\cC_u}(J;X_1,\ldots,X_m),
\end{equation}
by~\cite[Chapter~II, \S2.2, equation~(2.2.a) and Theorem~2.2.2]{Tu}.

Adding $p$ positive framing twists to each component of $B$ multiplies~\eqref{eq:observable-borromean} by $\theta_X^p\theta_Y^p\theta_Z^p$. Combining~\eqref{eq:observable-twists}--\eqref{eq:observable-surgery} and summing first over the colors at each fixed flux yields
\begin{equation}\label{eq:observable-finite-sum}
Z_u(M_p)=\frac1{p^3}
\sum_{a,b,c\in\Fp^3}
\zeta^{-Q(a)-Q(b)-Q(c)-u\det(a,b,c)}.
\end{equation}
Indeed, each dimension weight in the surgery sum multiplies one dimension factor from the Borromean amplitude, and the resulting three sums of squared dimensions contribute $(p^3)^3$, hence the overall factor is $D^{-4}(p^3)^3=p^{-3}$.

\subsection{Evaluation of the finite sum}

\begin{proof}[Proof of Theorem~\ref{thm:observable}]
Set $g=G_p(-1)$, $s=\left(\frac{-1}{p}\right)$ where $\left(\frac{\cdot}{p}\right)$ is the Legendre symbol, and $t=u^2/4\in\Fp^\times$. We use the quadratic Gauss-sum identities $g^2=sp$ and $\sum_{z\in\Fp}\zeta^{h z^2}=\left(\frac{-h}{p}\right)g$ for $h\ne0$. All dot products, cross products and matrices below are over $\Fp$.

Since $\det(a,b,c)=(a\times b)\cdot c$, completing the square in $c$ gives
\[
\sum_{c\in\Fp^3}\zeta^{-Q(c)-u(a\times b)\cdot c}
=g^3\zeta^{tQ(a\times b)}.
\]
Using $Q(a\times b)=Q(a)Q(b)-(a\cdot b)^2$, the remaining sum in $b$ is the quadratic Gauss sum for
\[
K_a=-I+t\bigl(Q(a)I-aa^{\mathsf T}\bigr),
\qquad
\det K_a=-\bigl(1-tQ(a)\bigr)^2.
\]
If $Q(a)\ne t^{-1}$, this matrix is nondegenerate and its determinant has quadratic character $s$, so
\[
\sum_{b\in\Fp^3}\zeta^{b^{\mathsf T}K_ab}=g^3.
\]
The determinant argument includes nonzero isotropic $a$, for which $Q(a)=0$.

If $Q(a)=r:=t^{-1}=4u^{-2}$, then $a\ne0$ and $K_a=-t aa^{\mathsf T}$ has rank one. The linear form $b\mapsto a\cdot b$ is surjective with fibers of size $p^2$, and $t$ is a square, hence
\begin{equation*}
	\sum_{b\in\Fp^3}\zeta^{b^{\mathsf T}K_ab}=p^2\sum_{z\in\Fp}\zeta^{-t z^2}=p^2g.
\end{equation*}
The exceptional level set $Q(a)=r$ has
\begin{equation}\label{eq:observable-shell}
	N_r:=\#\{a\in\Fp^3:Q(a)=r\}=p^2+sp
\end{equation}
points. For completeness, character orthogonality gives
\begin{equation*}
	N_r=\frac1p\sum_{h\in\Fp}\zeta^{-hr}\left(\sum_{z\in\Fp}\zeta^{hz^2}\right)^3=p^2+sp,
\end{equation*}
using that $r$ is a nonzero square and the preceding Gauss-sum identities.

As $\sum_a\zeta^{-Q(a)}=g^3$, splitting~\eqref{eq:observable-finite-sum}  into the cases $Q(a)\neq r$ and $Q(a)=r$ gives
\begin{align*}
	Z_u(M_p)&=p^{-3}g^3\left[g^3\bigl(g^3-N_r\zeta^{-r}\bigr)+p^2gN_r\zeta^{-r}\right]\\
	&=p^{-3}g^9+p^{-3}g^4(p^2-g^2)N_r\zeta^{-r}\\
	&=p g+p(p^2-1)\zeta^{-4u^{-2}},
\end{align*}
where $g^2=sp$ and $N_r=p(p+s)$ were used in the last line. Since $p(p^2-1)\ne0$ and $\zeta$ is primitive, $Z_u(M_p)=Z_v(M_p)$ is equivalent to $u^{-2}=v^{-2}$, hence to $u=\pm v$. Theorem~\ref{thm:step2} and Remark~\ref{rem:converse} identify these pairs with exactly the braided equivalence classes of the family.
\end{proof}

For $p=5$, using $G_5(-1)=\sqrt5$ gives
\begin{equation}\label{eq:observable-p5}
Z_1(M_5)=5\sqrt5+120\zeta,
\qquad
Z_2(M_5)=5\sqrt5+120\zeta^{-1}.
\end{equation}
These values are distinct and are complex conjugates. Thus the distinguishing observable is the complex amplitude on the specified oriented manifold. Taking only its absolute value would lose the distinction in this example.

\subsection{Physical meaning and comparison with two-component probes}

The finite sum has the usual finite-gauge-theory interpretation: $a,b,c\in V$ are the holonomies around the three surgery meridians, which generate $H_1(M_p,\mathbb Z)$, and the factor $p^{-3}=|V|^{-1}$ is the gauge-volume normalization~\cite[\S6.2, equations~(6.8)--(6.10)]{DW}. The quadratic terms in~\eqref{eq:observable-finite-sum} come from the $p$-fold twists, while the determinant comes from the alternating three-flux response of the Borromean link. The surgery sum combines these responses and sums over all colors, leaving an amplitude of the closed spacetime without an externally chosen identification of its anyons. Unlike the comparison of colored link data, this invariant requires no chosen bijection $\Lambda_{uv}$ between the simple objects of two categories.

There is a direct comparison with the preceding indistinguishability theorem. If $M$ admits an integral surgery presentation with at most two link components,~\eqref{eq:observable-surgery} and Theorem~\ref{thm:step1} imply that $Z_u(M)$ is independent of $u\in\Fp^\times$, since the bijections $\Lambda_{uv}$ preserve dimensions and every framed link evaluation in that sum. The three-component presentation above yields~\eqref{eq:observable-separates}: an analytically computable partition function distinguishing every inequivalent pair in this family, beyond the common one- and two-component RT link invariants.

\section*{Acknowledgments}
R.L. is supported by National Natural Science
Foundation of China under Grant No.~12422503. JT is supported by National Natural Science Foundation of China under Grant No.~12405085 and by the Natural Science Foundation of Shanghai (Grant No.~24ZR1419300).

\end{document}